\renewcommand{\d}{\mathrm{d}}
\renewcommand{\L}{\mathrm{L}}
\renewcommand{\H}{\mathrm{H}}
\newtheorem{theorem}{Theorem}
\newtheorem{lemma}{Lemma}
\newtheorem{definition}{Definition}
\newtheorem{remark}{Remark}
\newtheorem{proposition}{Proposition}
\newtheorem{assumption}{Assumption}
\def\Cc{\mathbb{C}}
\renewcommand{\d}{\mathrm{d}}
\renewcommand{\H}{\mathrm{H}}
\renewcommand{\L}{\mathrm{L}}
\newcommand{\re}{\mathrm{Re}\,}
\newcommand{\Pt}[1][t]{\mathbf{P}_{#1}}
\newcommand{\F}{\mathbb{F}}
\newcommand{\diag}{\mathrm{diag}}
\newcommand{\rank}{\mathrm{rank}}
\newcommand{\opt}{\mathrm{opt}}
\newcommand{\WBi}{\tilde W_{B,1}}
\newcommand{\WBh}{\tilde W_{B,2}}
\newcommand{\WCo}{\tilde W_C}
\newcommand{\zinf}[1][0]{[#1,\infty)}
\newcommand{\Lploc}[1][p]{\L_{\text{loc}}^{#1}}
\newcommand{\Hloc}[1][n]{\H_{\text{loc}}^{#1}}
\newcommand{\C}{\mathbb{C}}
\newcommand{\T}{\mathbb{T}}
\newcommand{\gs}{\sigma}
\newcommand{\Lin}{\mathcal{L}}
\newcommand{\citel}[2]{\cite[#2]{#1}}
\newcommand{\eq}[1]{\begin{align*}#1 \end{align*}}
\newcommand{\eqn}[1]{\begin{align}#1 \end{align}}
\newcommand{\pmat}[1]{\begin{bmatrix}#1 \end{bmatrix}}
\newcommand{\pmatsmall}[1]{\begin{bsmallmatrix}#1 \end{bsmallmatrix}}
\newcommand{\iprod}[2]{\langle #1,#2\rangle}
\newcommand*{\Iprod}[3][default]{\ifthenelse{\equal{#1}{default}}{\left\langle#2,#3\right\rangle}{\ldelim{#1}{\langle}#2,#3\rdelim{#1}{\rangle}}}
\newcommand{\norm}[1]{\| #1\|}
\newcommand{\Dom}{D}
\newcommand{\mc}[1]{\mathcal{#1}}
\newcommand{\ran}{\textup{Ran}}
\renewcommand{\ker}{\textup{Ker}}
\newcommand{\gl}{\lambda}
\newcommand{\inv}{^{-1}}
\journal{Elsevier}
\begin{document}

\begin{frontmatter}

\title{LQ optimal control for infinite-dimensional passive systems}

\author[label1]{Anthony Hastir}
\author[label1]{Birgit Jacob}
%\author[label2,label3]{Hans Zwart}

\affiliation[label1]{organization={School of Mathematics and Natural Sciences, University of Wuppertal},
             addressline={Gaußstraße 20},
             postcode={42119},
             city={Wuppertal},
             country={Germany}}

%\affiliation[label2]{organization={Department of Applied Mathematics, University of Twente},
%             addressline={P.O. Box 217},
%             postcode={7500 AE},  
%             city={Enschede},
%             country={The Netherlands}}

%\affiliation[label3]{organization={Department of Mechanical Engineering, Eindhoven University of Technology},
%             addressline={P.O. Box 513},
%             postcode={5600 MB},
%             city={Eindhoven},
%             country={The Netherlands}}

%% Abstract
\begin{abstract}
We study the Linear-Quadratic optimal control problem for a general class of infinite-dimensional passive systems, allowing for unbounded input and output operators. We show that under mild assumptions, the finite cost condition is always satisfied. Moreover, we show that the optimal cost operator is a contraction. In the case where the system is energy preserving, the optimal cost operator is shown to be the identity, which allows to deduce easily the unique stabilizing optimal control. In this case, we derive an explicit solution to an adapted operator Riccati equation. We apply our results to boundary control systems, first-order port-Hamiltonian systems and an Euler-Bernoulli beam with shear force control.
\end{abstract}

%% Keywords
\begin{keyword}
LQ optimal control; Infinite-dimensional passive systems; System nodes; Boundary control systems; Port-Hamiltonian systems
\end{keyword}

\end{frontmatter}

%% Introduction
\section{Introduction}\label{sec:Intro}
The Linear-Quadratic (LQ) optimal control problem has been widely studied over the past few years, for many different kinds of linear systems. Its solution can be easily characterized in the case where the dynamics is given by
\begin{subequations}
\label{eq:Finite_Dim}
\eqn{
  \dot{x}(t) &= Ax(t) + Bu(t), \qquad x(0) = x_0,\label{State_Finite}\\
  y(t) &= Cx(t) + Du(t),\label{Output_Finite}
  }
\end{subequations}

where $A$, $B$, $C$, $D$ are either matrices or $A$ is an (unbounded) operator that generates a strongly continuous semigroup on the Hilbert space $X$ and $B$, $C$, $D$ are bounded linear operators, i.e., $B\in\mathcal{L}(U,X)$, $C\in\mathcal{L}(X,Y)$, $D\in\mathcal{L}(U,Y)$, $U$ and $Y$ being the input and output (Hilbert) spaces. In both cases, the optimal control which minimizes the cost 
\begin{equation*}
J(x_0,u) = \int_0^\infty \Vert u(t)\Vert^2_U + \Vert y(t)\Vert^2_Y\d t
\end{equation*}
over $u\in\L^2(0,\infty;U)$ is given by the state feedback $u(t) = Kx(t) = -(I+D^*D)^{-1}(B^*\Theta+D^*C) x(t)$, where $\Theta = \Theta^*, \Theta\in\mathcal{L}(X)$, is the smallest nonnegative solution of the control algebraic Riccati equation
%\begin{equation*}
%A^*\Theta + \Theta A + C^*C = \left(\Theta B + C^*D\right)\left(I + D^*D\right)^{-1}\left(B^*\Theta + D^*C\right).
%\end{equation*}
%This Riccati equation may also be formulated in the weak sense as
\begin{align}
&\langle Ax, \Theta x\rangle + \langle \Theta x, Ax\rangle + \langle Cx, Cx\rangle\nonumber\\
&= \langle (I + D^*D)^{-1}(B^*\Theta + D^*C)x,(B^*\Theta + D^*C)x\rangle,
\label{Riccati_WeakForm}
\end{align}
for $x\in D(A)$ or $x\in X$ if $X$ is finite-dimensional. The optimal cost is given by $\langle \Theta x_0,x_0\rangle = \inf_{u\in\L^2(0,\infty;U)}J(x_0,u)$. The theory of LQ optimal control for both finite and infinite-dimensional systems with bounded input and output operators is described extensively in \cite{Curtain_Pritchard_Riccati}, \cite[Chap.~4]{CurtainPritchard} and \cite{Kwakernaak_Sivan}, or in \cite[Chap.~9]{CurtainZwart2020}. 

In addition to a state space solution of LQ optimal control, a frequency domain approach can be used, known as the spectral factorization method. For\footnote{The notation $\rho(A)$ stands for the resolvent set of $A$.} $s\in\rho(A)$, we denote by $P(s)$ the transfer function of the system. Given the Popov function 
%\marginpar{Should we say what $P$ is?}
\begin{equation}
\Xi(i\omega) := I + P(i\omega)^*P(i\omega),\quad  \omega\in\mathbb{R},
\label{eq:Popov}
\end{equation}
the spectral factorization method consists in finding a spectral factor of $\Xi$, that is, a function\footnote{The Hardy space $\mathbf{H}^\infty(\mathcal{L}(U))$ consists in analytic functions $P:\mathbb{C}_+\to\mathcal{L}(U)$ that are bounded in $\mathbb{C}_+$, i.e., $\sup_{s\in\mathbb{C}_+}\Vert P(s)\Vert<\infty$, with $\mathbb{C}_+ = \{s\in\mathbb{C}\ | \  \mathrm{Re}\,s> 0\}$.} $\chi\in \mathbf{H}^\infty(\mathcal{L}(U))$ for which $\chi^{-1}\in \mathbf{H}^\infty(\mathcal{L}(U))$ and that satisfies 
\begin{align*}
\Xi(i\omega) = \chi(i\omega)^*\chi(i\omega), 
\end{align*}
for almost every $\omega\in\mathbb{R}$. The inverse of the so-determined spectral factor is precisely the closed-loop transfer function up to a scaling factor. This can be used to determine the optimal feedback operator. This method has been extensively studied in \cite{CallierWinkin1990} and \cite{CallierWinkin1992} in the case where the input and the output operators are bounded. 
%\marginnote{Should we say how to find the solution? Why is $\chi$ useful?}

The approaches described above are no more valid when the input and output operators are allowed to be unbounded. In such a situation, 
the authors in \cite{WeissWeiss} and \cite{Staffans_LQ_1998} proposed the following operator Riccati equation
\begin{align}
  &\langle Ax, \Theta x\rangle + \langle \Theta x, Ax\rangle + \langle Cx, Cx\rangle\nonumber\\
  &= \langle (\Omega^*\Omega)^{-1}(B^*_\omega\Theta + D^*C)x,(B^*_\omega\Theta + D^*C)x\rangle,
\label{Riccati_Unbounded}
\end{align}
$x\in D(A)$, where $B^*_\omega$ is the Yosida extension of $B^*$, whose precise definition is given in e.g.~\cite{Weiss_Yosida_Ext} or \cite{Tucsnak_Weiss_LTI_Case}. In the above Riccati equation, the operator $\Omega$ is obtained by taking the limit of $\chi(s)$ for $s$ going to $\infty$, $s$ real, provided that the limit exists, that is, provided that the spectral factor is regular. The relation $I + D^*D = \Omega^*\Omega$ is in general not satisfied, unlike in the case where the input and output operators are bounded. Another major difference between the bounded and the unbounded situations is that the closed-loop system is not necessarily obtained by connecting the optimal feedback to the original system, see~\cite[Sec.~3]{Weiss_Zwart}. The spectral factorization method described above has also been extended to systems with unbounded input and output operators in \cite{WeissWeiss} where the authors consider weakly regular systems. Further developments on this method may be found in \cite{Opmeer_Staffans} and \cite{Opmeer_MTNS}.  

Different variations of the Riccati equation \eqref{Riccati_Unbounded} exist, see e.g.~\cite{Pritchard_Salamon}, \cite{WeissWeiss}, \cite{Curtain_LQ_SIAM} and \cite{mikkola2016riccati}. In \cite{Curtain_LQ_SIAM}, Riccati equations for well-posed linear systems have been considered under the assumption that $0$ is in the resolvent set of the operator dynamics, giving the possibility to define a reciprocal Riccati equation, based on reciprocal systems. The reciprocal system has the advantage of only involving bounded operators but those operators are often difficult to compute. A few years later, \cite{mikkola2016riccati} proposed generalizations of previous work on LQ optimal control by introducing an integral Riccati equation. More recently, another type of Riccati equation called the \textit{operator node Riccati equation} has been considered in \cite{Opmeer_MTNS} and \cite{Opmeer_Staffans} for general infinite-dimensional systems. 

In the particular situation where $A = -A^*$, $B = C^*$ and $D = 0$ in \eqref{eq:Finite_Dim}, it is easy to see that $\Theta = I$ is a nonnegative solution to the Riccati equation \eqref{Riccati_WeakForm}, see \cite{Zwart1996}. Moreover, if the negative output feedback $u(t) = -y(t)$ is strongly stabilizing (that is, the semigroup generated by $A-BB^*$ is strongly stable), then $\Theta = I$ is the unique nonnegative stabilizing solution to the Riccati equation \eqref{Riccati_WeakForm}. Note that the strong stability of the semigroup generated by $A-BB^*$ may be obtained by assuming that $A$ has compact resolvent and that $(A,B,B^*,0)$ is approximately controllable, see e.g. \citel{CurZwa16}{Thm.~1.2} and \citel{Sle89}{Sec.~2}. Systems $(A,B,C,0)$ with $A$ being a dissipative operator and with $B = C^*$ are more commonly called passive systems. Roughly speaking, such systems have the property that
the energy supplied to the system is bounded by the product between the input and the output, that is,
\eq{
\Vert x(t)\Vert_X^2 - \Vert x_0\Vert^2_X\leq 2\int_0^t\re\langle u(\tau),y(\tau)\rangle_U\d\tau,
}
for every $t\geq 0$. If in addition $A = -A^*$, then the system is called impedance energy preserving, which means that the supplied energy is exactly the product between inputs and outputs, i.e., the inequality above is an equality. In particular, the relation $B = C^*$ means that the input and the output are collocated.  Another kind of passivity is called \emph{scattering passivity}. A system that possesses this property satisfies
\eq{
\Vert x(t)\Vert_X^2-\Vert x_0\Vert_X^2 \leq \Vert u\Vert^2_{\L^2(0,t;U)}-\Vert y\Vert_{\L^2(0,t;Y)}^2,
}
for every $t\geq 0$, and an equality in the above estimate means that the system is scattering energy preserving.
In this manuscript, we focus on passive systems with impedance and dispersion and consider fairly general systems that allow for unbounded input and output operators. This leads us to the class of so-called \emph{well-posed linear systems}, see \citel{TucWei14}{Def.~3.1} for more details. 

Passive systems have been studied a lot in the last decades. Passivity has been characterized in the context of well-posed linear systems in \cite{Sta02}, where the author uses the formalism of system nodes. Few years later, the authors of \cite{MalSta06} studied conservative boundary control systems, while impedance passive and conservative boundary control systems are extensively analyzed in \cite{MalSta07}. Passive systems also attracted a lot of attention from an application point of view. Many examples of passive systems can be found in the survey \cite{TucWei14}. Port-Hamiltonian systems are also known to be passive; see, e.g. \cite{Maschke_1992} or \cite{Schaft1995}. Infinite-dimensional port-Hamiltonian systems have also been studied in \cite{JacobZwart} where it is mentioned that some properties of the system matrices imply passivity. 

In this manuscript, we study the LQ optimal control problem in the context of passive systems. We show that, under some mild assumptions, the optimal cost operator has surprisingly nice properties. For energy preserving systems we derive an explicit solution for the optimal cost operator, which leads to a unique optimal control that is either a negative output feedback for impedance energy preserving systems or the zero input in the case of scattering energy preserving systems. A similar approach is used to compute the optimal control for a class of second-order systems with collocated input and outputs in \cite{Wei02}. Therein, the author transforms the original system into a scattering energy preserving system in order to compute the unique optimal control. Our approach is based on the formalism of system nodes, which allows for a relatively high degree of generality in the considered types of systems, including for instance systems with boundary control and boundary observation, see~\cite{Sta05book}. Our results on LQ optimal control are based on the so-called \emph{operator-node Riccati equation} proposed in \cite{Opmeer_Staffans} and on the characterization of passivity for systems nodes, see \cite{Sta02}. We emphasize that this approach has the advantage of not requiring any extension of unbounded operator involved in the Riccati equation \eqref{Riccati_Unbounded}. We also take advantage of our results to provide a Riccati equation that is well-adapted to boundary control systems, allowing to apply our results to the class of first-order linear port-Hamiltonian systems considered in \cite{JacobZwart}. This manuscript is organized as follows: Section \ref{sec:Prelim} is dedicated to the presentation of our systems class. Our main results on LQ optimal control for passive systems are presented in Section \ref{sec:LQ_SystemNodes}. Those results are recast for linear boundary control systems in Section \ref{sec:LQ_BCS} and for a class of first-order port-Hamiltonian systems in Section \ref{sec:App_pHs}. We illustrate our main result on an Euler-Bernoulli beam with shear force control in Section \ref{sec:EB}.

\subsection*{Notation} Given $X$ a Hilbert space and $A:\Dom(A)\subset X\rightarrow X$ a linear operator we denote by $\Dom(A)$, $\ker(A)$ and $\ran(A)$ the domain, kernel, and range of $A$, respectively. Moreover, $\gs(A)$
and $\rho(A)$ denote the spectrum
and the \mbox{resolvent} set of $A$, respectively. Given another Hilbert space $Y$, $\Lin(X,Y)$ denotes the space of bounded linear operators from $X$ to $Y$ and we write $\Lin(X)$ for $\Lin(X,X)$. For $\tau>0$ and $u:\zinf\to U$ the truncation of the function $u$ to the interval $[0,\tau]$ is defined by $\Pt[\tau]u: [0,\tau]\to U$ and $ \mathbf{S}_\tau $ stands for the bilateral right shift operator. For any $ u, v\in \L^2(0,\infty;U) $ and any $\tau\geq 0 $, the $ \tau-$concatenation of $ u $ and $ v $ is the function defined by $u\underset{\tau}{\diamondsuit} v = \mathbf{P}_{[0,\tau]} u + \mathbf{S}_\tau v$. The notation $\Cc_+$ is used for $\{\gl\in \Cc \ | \ \re\gl>0\}$. For a generator $A$ of a strongly continuous semigroup we denote by $X_{-1}$ the completion of the space $(X,\norm{(\gl_0-A)\inv \cdot}_X)$ where  $\gl_0\in\rho(A)$ is fixed. According to ~\citel{Sta05book}{Sec.~3.6}, $A:\Dom(A)\subset X\to X$ extends to an operator in $\Lin(X,X_{-1})$, that we will denote by $A$ as well. The Lebesgue space of measurable and square integrable functions defined on $\Omega$ with values in a Banach space $W$ is denoted by $\L^2(\Omega;W)$. This is a Hilbert space with norm $\Vert f\Vert^2_{\L^2(\Omega;W)} := \int_\Omega \Vert f(\omega)\Vert^2_W\d\omega$. A function $f$ is said to be in $\L^2_{\mathrm{loc}}(\Omega;W)$ if $f\in\L^2(K;W)$ for every compact set $K\subseteq \Omega$. The space of 
essentially bounded functions defined on $\Omega$ with values in $W$ is denoted by $\L^\infty(\Omega;W)$. The norm on $\L^\infty(\Omega;W)$ is defined by $\Vert f\Vert_{\L^\infty(\Omega;W)} := \sup_{\omega\in\Omega}\Vert f(\omega)\Vert_W$. The space of continuous functions defined on $\Omega$ with values in $W$ is written $C(\Omega;W)$ whereas a continuous function whose first derivative is again continuous is said to belong to $C^1(\Omega;W)$. The Sobolev space of square integrable functions from $\Omega$ to $W$ whose weak derivatives up to order $k$ are square integrable is denoted by $\H^k(\Omega;W)$. The $\H^k(\Omega;W)$-norm of a function $f$ is defined as $\Vert f\Vert_{\H^k(\Omega;W)}^2 := \sum_{n=0}^k\Vert \frac{\d^n f}{\d\omega^n}\Vert_{\L^2(\Omega;W)}^2$. A function $f$ is said to be in $\H^k_{\mathrm{loc}}(\Omega;W)$ if it is in $\H^k(K;W)$ for every compact set $K\subseteq \Omega$. The Hardy space $\mathbf{H}^\infty(\mathcal{L}(U))$ consists in analytic functions $P:\mathbb{C}_+\to\mathcal{L}(U)$ that are bounded in $\mathbb{C}_+$, that is, $\sup_{s\in\mathbb{C}_+}\Vert P(s)\Vert<\infty$, where $\mathbb{C}_+ = \{s\in\mathbb{C}\ | \  \mathrm{Re}\,s> 0\}$.

\section{Well-posed linear systems and system nodes}\label{sec:Prelim}
In this section, we review some results and properties on well-posed linear systems and system nodes with a particular focus on passivity.
\subsection{Well-posed linear systems}
Let $U$, $X$ and $Y$ be Hilbert spaces. The concept of a well-posed linear system is defined hereafter, see~\citel{TucWei14}{Def.~3.1}.
\begin{definition}\label{def:WP}
A well-posed linear system is a family of operators $\Sigma = (\Sigma_t)_{t\ge 0} = (\T_t,\Phi_t,\Psi_t,\F_t)_{t\ge0}$ where 
\begin{itemize}
\item $ \mathbb{T} = \left(\mathbb{T}_t\right)_{t\geq 0} $ is a strongly continuous semigroup on $ X $,
\item $ \Phi = \left(\Phi_t\right)_{t\geq 0} $ is a family of bounded linear operators from $ \L^2(0,\infty;U) $ to $ X $ such that $\Phi_{\tau + t}(u\underset{\tau}{\diamondsuit}v)  = \mathbb{T}_t \Phi_\tau u + \Phi_t v$, for every $ u,v\in \L^2(0,\infty;U) $ and all $ \tau,t\geq 0 $,
\item $ \Psi = \left(\Psi_t\right)_{t\geq 0} $ is a family of bounded linear operators from $ X $ to $ \L^2(0,\infty;Y) $ such that $\Psi_{\tau + t}x_0 = \Psi_\tau x_0 \underset{\tau}{\diamondsuit}\Psi_t \mathbb{T}_\tau x_0$, for every $ x_0\in X $ and all $ \tau,t\geq 0 $ and $ \Psi_0 = 0 $,
\item $ \mathbb{F} = \left(\mathbb{F}_t\right)_{t\geq 0} $ is a family of bounded linear operators from $ \L^2(0,\infty;U) $ to $ \L^2(0,\infty;Y) $ such that
$\mathbb{F}_{\tau + t}(u\underset{\tau}{\diamondsuit} v) = \mathbb{F}_\tau \underset{\tau}{\diamondsuit}(\Psi_t \Phi_\tau u + \mathbb{F}_t v)$, for every $ u,v\in \L^2(0,\infty;U) $ and all $ \tau,t\geq 0 $, and $ \mathbb{F}_0 = 0$,
\item $ \Phi $ is causal, i.e., $ \Phi_\tau \mathbf{P}_\tau = \Phi_\tau $ for all $ \tau \geq 0 $,
\item For all $ \tau, t\geq 0 $, $\Phi_{\tau + t}\mathbf{P}_{[0,\tau]} = \mathbb{T}_t \Phi_\tau$, $\mathbf{P}_{[0,\tau]} \Psi_{\tau + t} = \Psi_\tau$, $\mathbf{P}_{[0,\tau]}\mathbb{F}_{\tau + t}\mathbf{P}_{[0,\tau]} = \mathbf{P}_{[0,\tau]} \mathbb{F}_{\tau + t} = \mathbb{F}_\tau$.
\end{itemize}
\end{definition}
In the above definition, $U$, $X$ and $Y$ are called the input space, the state space and output space, respectively.
We define the \emph{mild state trajectory} $x\in C(\zinf;X)$ and \emph{mild output} $y\in\Lploc[2](0,\infty;Y)$
corresponding to the initial state 
$x_0\in X$  and the input $u\in \Lploc[2](0,\infty;U)$  so that
%\footnote{For $\tau>0$ and $u:\zinf\to U$ the truncation of the function $u$ to the interval $[0,\tau]$ is defined by $\Pt[\tau]u: [0,\tau]\to U$.}
\begin{subequations}
\label{eq:PrelimMildStateOut}
\eqn{
x(t) &= \mathbb{T}_t x_0 + \Phi_t\Pt u,\label{Eq_State}\\ 
\mathbf{P}_t y &= \Psi_t x_0 + \mathbb{F}_t\Pt u,\label{Eq_Output} 
}
\end{subequations}
for all $t\geq 0$.
The \emph{extended output map} and \emph{extended input-output map}~\citel{TucWei14}{Sec.~3} of $\Sigma$ are denoted by $\Psi_\infty:X\to \Lploc[2](0,\infty;Y)$ and $\F_\infty:\Lploc[2](0,\infty;U)\to \Lploc[2](0,\infty;Y)$, respectively.
Using these operators the output $y$ corresponding to the initial state $x_0$ and input $u$ can be equivalently expressed as $y=\Psi_\infty x_0 + \F_\infty u$.
In the next definitions, we describe passivity for well-posed linear systems. We distinguish between different kinds of passive systems, namely, \emph{impedance passive}, \emph{impedance energy preserving}, \emph{scattering passive} and \emph{scattering energy preserving} systems.
\begin{definition}\label{def:Imp_Passive}
A well-posed system $\Sigma$ is called \emph{impedance passive} if $Y=U$ and
 if every mild state trajectory $x$ and mild output $y$ corresponding to 
an initial state $x_0\in X$ and an input $u\in \Lploc[2](0,\infty;U)$ satisfy
\eqn{
\label{eq:ImpPass_Sigma}
\Vert x(t)\Vert_X^2 - \Vert x_0\Vert_X^2
\leq 2\re\int_0^t\langle u(\tau),y(\tau)\rangle_U\d\tau, \qquad t\geq 0.
}
$\Sigma$ is called impedance energy preserving if the inequality in \eqref{eq:ImpPass_Sigma} is an equality.
\end{definition}

\begin{definition}\label{def:Scatt_Passive}
A well-posed system $\Sigma$ is called \emph{scattering passive}
if every mild state trajectory $x$ and mild output $y$ corresponding to 
an initial state $x_0\in X$ and an input $u\in \Lploc[2](0,\infty;U)$ satisfy
\eqn{
\label{eq:ScattPass_Sigma}
\Vert x(t)\Vert_X^2 - \Vert x_0\Vert_X^2
\leq \Vert u\Vert_{\L^2(0,t;U)}^2 - \Vert y\Vert_{\L^2(0,t;Y)}^2, \qquad t\geq 0.
}
$\Sigma$ is called scattering energy preserving if the inequality in \eqref{eq:ScattPass_Sigma} is an equality.
\end{definition}

\subsection{System nodes}\label{sec:SysNodes}
The concept of a \emph{system node} is closely related to well-posed linear systems and is defined in the following definition. 

\begin{definition}[\textup{\citel{Sta05book}{Def.~4.7.2}}]
\label{def:SysNode} 
Let $X$, $U$, and $Y$ be Hilbert spaces.
A closed operator 
\eq{
S:=\pmat{A\&B\\ C\&D} : \Dom(S) \subset X\times U \to X\times Y
}
is called a \emph{system node} on the spaces $(U,X,Y)$ if it has the following properties.
\begin{itemize}
\item The operator $A: \Dom(A)\subset X\to X$ defined by $Ax =A\&B \pmatsmall{x\\0}$ for $x\in \Dom(A)=\{x\in X\ | \ (x,0)^\top\in \Dom(S)\}$ generates a strongly continuous semigroup on $X$.
\item The operator $A\& B$ (with domain $\Dom(S)$) can be extended to an operator $[A,\ B]\in \Lin(X\times U,X_{-1})$.
\item $\Dom(S) = \{(x,u)^\top \in X\times U\ | \ Ax+Bu\in X\}$.
\end{itemize}
\end{definition}

The operator $A$ is called the semigroup generator of $S$.
Every well-posed linear system $\Sigma = (\T,\Phi,\Psi,\F)$ is associated with a unique system node $S$~\citel{TucWei14}{Sec.~4}.
In particular, if $x_0\in X$ and $u\in \Hloc[1](0,\infty;U)$ are such that $(x_0,u(0))^\top \in \Dom(S)$, then the mild state trajectory $x$ and output $y$ of $\Sigma$ defined by~\eqref{eq:PrelimMildStateOut} satisfy $x\in C^1(\zinf;X)$, $x(0)=x_0$ and $y\in \Hloc[1](0,\infty;Y)$ and
\eqn{
\label{eq:PrelimSysnodeEqn}
\pmat{\dot x(t)\\y(t)} = S \pmat{x(t)\\ u(t)}, \qquad t\geq 0.
}
A system node $S$ is called well-posed if it is the system node of some well-posed linear system $\Sigma$. The conditions in Definition~\ref{def:SysNode} imply that $C\& D\in \Lin(\Dom(S),Y)$. 
The transfer function 
$P:\rho(A)\to \Lin(U,Y)$ of a system node $S$ on $(U,X,Y)$ is defined
by
\eq{
P(s)u = C\& D \pmat{(s-A)\inv Bu\\ u}, \qquad u\in U, \ s\in\rho(A).
}
In addition, the output operator $C\in \Lin(\Dom(A),Y)$ of $S$ is defined by $Cx= C\& D \pmatsmall{x\\0}$ for all $x\in \Dom(A)$. Before continuing, we introduce the concepts of classical and generalized solutions for systems nodes, see~\citel{TucWei14}{Def.~4.2}.
\begin{definition}\label{def:classicalSol_SN}
Let $S$ be a system node on the spaces $(U,X,Y)$ with domain $D(S)$. A triple $(x,u,y)$ is called a classical solution of \eqref{eq:PrelimSysnodeEqn} on $[0,\infty)$ if 
\begin{itemize}
    \item $x\in C^1([0,\infty);X)$, $u\in C([0,\infty);U)$ and $y\in C([0,\infty);Y)$;
    \item $\pmatsmall{x(t)\\u(t)}\in D(S)$ for all $t\geq 0$;
    \item \eqref{eq:PrelimSysnodeEqn} holds.
\end{itemize}
\end{definition}
According to \citel{TucWei14}{Sec.~4}, it follows from Definition \ref{def:classicalSol_SN} that every classical solution of \eqref{eq:PrelimSysnodeEqn} on $[0,\infty)$ also satisfies $\pmatsmall{x\\u}\in C([0,\infty);D(S))$.
\begin{definition}\label{def:genSol_SN}
Let $S$ be a system node on the spaces $(U,X,Y)$ with domain $D(S)$. A triple $(x,u,y)$ is called a generalized solution of \eqref{eq:PrelimSysnodeEqn} on  $[0,\infty)$ if
\begin{itemize}
    \item $x\in C([0,\infty);X)$, $u\in\L^2_{\mathrm{loc}}(0,\infty;U)$, $y\in\L^2_{\mathrm{loc}}(0,\infty;Y)$;
    \item there exists a sequence $(x_k,u_k,y_k)$ of classical solutions of \eqref{eq:PrelimSysnodeEqn} such that $x_k\to x$ in $C([0,\infty);X)$, $u_k\to u$ in $\L^2_{\mathrm{loc}}(0,\infty;U)$ and $y_k\to y$ in $\L^2_{\mathrm{loc}}(0,\infty;Y)$.
\end{itemize}
By $u_k\to u$ in $\L^2_{\mathrm{loc}}(0,\infty;U)$, we mean $\mathbf{P}_{[0,\tau]}u_k\to\mathbf{P}_{[0,\tau]}u$ in $\L^2(0,\tau;U)$ for every $\tau\geq 0$.
\end{definition}
We will now define \emph{passivity} of a system node, based on algebraic conditions on the operators $A\&B$ and $C\&D$, see \citel{Sta02}{Thm.~4.2, Thm.~3.3}.
\begin{definition}\label{def:ImpPass}
A system node $S$ on $(U,X,Y)$ is \emph{impedance passive} if $Y=U$ and 
\eqn{
    \re \left\langle A\& B
        \pmat{x\\ u}
,x\right\rangle_X \leq \re \left\langle C\& D\pmat{x\\ u },u\right\rangle_U
\label{eq:EstimEquivImpPass}
}
for all $(x, u)^\top\in \Dom(S)$. $S$ is impedance energy preserving if the inequality in \eqref{eq:EstimEquivImpPass} is an equality.
\end{definition}

\begin{definition}\label{def:ScattPass}
A system node $S$ on $(U,X,Y)$ is \emph{scattering passive} if 
\eqn{
    2\re \left\langle A\& B
        \pmat{x\\ u}
,x\right\rangle_X \leq \Vert u\Vert_U^2 - \left\Vert C\& D\pmat{x\\ u }\right\Vert_Y^2
\label{eq:EstimEquivScattPass}
}
for all $(x, u)^\top\in \Dom(S)$. $S$ is scattering energy preserving if the inequality in \eqref{eq:EstimEquivScattPass} is an equality.
\end{definition}

In the literature, it is more common to use a definition based on the solutions of~\eqref{eq:PrelimSysnodeEqn}, but the definitions are equivalent by~\citel{Sta02}{Thm.~3.3, Thm.~3.4, Thm.~4.2, Thm.~4.6}. In this way, a system node $S$ is impedance (resp. scattering) passive if all the generalized solutions of \eqref{eq:PrelimSysnodeEqn} satisfy \eqref{eq:ImpPass_Sigma} (resp. \eqref{eq:ScattPass_Sigma}) for every $t\geq 0$. Due to this equivalence, a well-posed system node $S$ is impedance (resp. scattering) passive if and only if its associated well-posed linear system $\Sigma$ is impedance (resp. scattering) passive. The same characterization is valid for impedance (resp. scattering) energy preserving systems.

\begin{remark}
The transfer function of an impedance passive well-posed system node $S$ coincides with the transfer function of the associated well-posed linear system $\Sigma$ on $\Cc_+$.
Moreover, if $S$ is an impedance passive system node, it has been shown in \citel{CurWei19}{Thm.~4.2} that the operator $-K\in\mathcal{L}(U)$ is an admissible output feedback operator for $S$. In addition, according to \citel{HasPau25}{Thm.~2.5}, if $\re K\geq I$, then the negative output feedback $-K$ applied to $S$ leads to a scattering passive system node, and hence well-posed.
\end{remark}

\section{LQ optimal control of passive systems}\label{sec:LQ_SystemNodes}
This section is devoted to the LQ optimal control for passive systems by using the formalism of system nodes. It will be shown that, under mild conditions, the optimal cost operator has interesting properties. This is done by taking advantage of the so-called \emph{operator-node} Riccati equation proposed in \cite{Opmeer_Staffans} and of the characterizations of passivity for system nodes described in Section \ref{sec:SysNodes}. We also provide a simple factorization of the so-called \emph{Popov} function in the case where the system node is impedance or scattering energy preserving. 

Mathematically speaking, the control objective consists in minimizing the cost 
\eqn{
\label{eq:Cost}
J(x_0,u) = \int_0^\infty \Vert u(t)\Vert^2_U + \Vert y(t)\Vert_Y^2\d t
}
subject to the dynamics 
\eqn{
\label{eq:SN}
\pmat{\dot{x}(t)\\y(t)} = \pmat{A\& B\\ C\&D}\pmat{x(t)\\u(t)}, \qquad t\geq 0,
}
where $S := \pmatsmall{A\&B\\C\&D}$ is a system node on the spaces $(U,X,Y)$. LQ optimal control and system nodes have been combined in \cite{Opmeer_Staffans} in which the authors provide an \emph{operator-node} Riccati equation that leads to the optimal cost operator. In \cite{Opmeer_MTNS}, it has also been shown that this Riccati equation is general and comprises other descriptions of Riccati equations used to treat the LQ optimal control problem for systems with unbounded input and output operators. Under an optimizability condition\footnote{Optimizability means that there exists at least one input $u(t)$ such that $J(x_0,u)<\infty$ for every $x_0\in X$. This is often called the \emph{finite cost} condition.}, it is known that minimizing the cost \eqref{eq:Cost} subject to \eqref{eq:SN} leads to a unique minimum. Moreover, the optimal cost is given by $J(x_0,u^{\opt}) = \langle x_0,\Pi x_0\rangle_X$, where $\Pi = \Pi^*\in\mathcal{L}(X)$ is the solution to an appropriate operator Riccati equation, namely the \emph{operator node} Riccati equation. Its definition is given below.
\begin{definition}
Let $S$ be a system node on the spaces $(U,X,Y)$. The operators $\Pi = \Pi^*\in\mathcal{L}(X)$ and $E\& F: D(S)\to U$ are called a solution to the operator node Riccati equation for $S$ if
\begin{align}
&\left\langle A\& B\left[\begin{matrix}x\\ u\end{matrix}\right], \Pi x\right\rangle_X + \left\langle \Pi x, A\& B\left[\begin{matrix}x\\ u\end{matrix}\right]\right\rangle_X + \left\Vert C\& D\left[\begin{matrix}x\\ u\end{matrix}\right]\right\Vert^2_Y + \left\Vert u\right\Vert^2_U\nonumber\\
&\hspace{5cm} = \left\Vert E\& F\left[\begin{matrix}x\\ u\end{matrix}\right]\right\Vert^2_U
\label{ON_Riccati}
\end{align}
for all $(x,u)^\top\in D(S)$. 
\end{definition}

This operator node Riccati equation might be surprising at first glance because no apparent quadratic term seems to be present. This is precisely the role of the operator $E\&F$, which encodes that the optimal input is of feedback form. According to \cite{Opmeer_MTNS}, solving the equation $E\&F\left[\begin{smallmatrix}x^{\opt}\\u^{\opt}\end{smallmatrix}\right] = 0$ leads to the optimal input $u^{\opt}$. The operator node Riccati equation \eqref{ON_Riccati} may be viewed as the extension of the Lur'e form of the Riccati equation to system nodes. Compared to other Riccati equations, see e.g.~\cite{WeissWeiss, Curtain_LQ_SIAM, Curtain_Pritchard_Riccati}, the operator node Riccati equation has the advantage of not having any extension of unbounded operators in it. Moreover, it even does not require well-posedness of the system node $S$.

\subsection{Impedance passive systems}

Consider a system node $S := \pmatsmall{A\&B\\ C\&D}$ on the spaces $(U,X,U)$. Moreover, consider the system node $S^K:= \left[\begin{smallmatrix}(A\&B)^K\\ (C\&D)^K\end{smallmatrix}\right]$ obtained by interconnecting $S$ with the negative output feedback $u = -y+v$. According to \citel{HasPau25}{Thm.~2.5}, $S^K$ is a scattering passive\footnote{As a consequence, $S^K$ is well-posed.} system node whose semigroup generator, denoted by $A^K$, is given by 
%well-posed linear system $\Sigma^K = (\mathbb{T}_t^K,\Psi_t^K,\Phi_t^K,\mathbb{F}_t^K)_{t\geq 0}$ obtained by interconnecting $\Sigma$ with the negative output feedback $u = -y+v$. The associated system node is denoted by $S^K := \left[\begin{smallmatrix}(A\&B)^K\\ (C\&D)^K\end{smallmatrix}\right]$. We define by $A^K$ the generator of $\mathbb{T}_t^K$. According to \citel{HasPau25}{Thm.~2.5} it is given by 
\begin{subequations}
\label{eq:StabAK}
\eqn{
\Dom(A^K) &= \Bigl\{ x\in X\, \Bigm| \exists v\in U \mbox{ s.t.}
\ Ax - Bv\in X,\
%\\
%& \hspace{.25cm}
% \mbox{ and } 
v = C\& D \pmatsmall{x\\-v}
\Bigr\}\\
A^K x &= Ax - Bv(x),
\qquad x\in \Dom(A^K),
}
\end{subequations}
where $v(x)$ is the element $v$ in the definition of $\Dom(A^K)$. In what follows, we denote by $\mathbb{T}^K$ the semigroup generated by $A^K$.

Recall that a strongly continuous semigroup $\T$ on $X$ is called \emph{strongly stable} if $\norm{\T_tx_0}\to 0$ as $t\to \infty$ for all $x_0\in X$.
%a linear system $\Sigma = (\mathbb{T}_t,\Phi_t,\Psi_t,\mathbb{F}_t)_{t\geq 0}$ with input and output spaces $U$ and with state space $X$. We denote by $S := \left[\begin{smallmatrix}A\&B\\ C\&D\end{smallmatrix}\right]$ the unique system node that is associated to $\Sigma$.

%Now observe that the cost $J(x_0,u)$, see \eqref{eq:Cost}, may be written equivalently as
%\begin{align}
%    J(x_0,u) &= \int_0^\infty \Vert u(t)\Vert^2_U + \Vert y(t)\Vert_U^2\d t\nonumber\\
%    &= \int_0^\infty \Vert u(t) + y(t)\Vert^2_U - 2\re\left\langle u(t),y(t)\right\rangle_U\d t.
%    \label{eq:Cost_ImpPass_Modif}
%\end{align}

The following theorem gives some properties of the optimal cost operator when $S$ is impedance passive or impedance energy preserving.

\begin{theorem}\label{thm:Thm_LQ_ImpPass}
Consider $S = \left[\begin{smallmatrix}A\& B\\ C\& D\end{smallmatrix}\right]$ an impedance passive system node on the spaces $(U,X,U)$ and consider the cost \eqref{eq:Cost} associated to the LQ optimal control problem. Then the finite cost condition is satisfied and the optimal cost operator, denoted by $\Pi$, satisfies $\Pi\leq I$ in the sense that $\langle x_0,\Pi x_0\rangle\leq\Vert x_0\Vert^2$ for all $x_0\in X$. If in addition $S$ is impedance energy preserving and if $\mathbb{T}^K$ is strongly stable, then the unique stabilizing optimal control is $u(t) = -y(t)$, $t\geq 0$. The associated optimal cost operator is $\Pi = I$. In this case, $\Pi$ satisfies the operator-node Riccati equation with $E\&F:D(S)\to U$,  $E\&F\left[\begin{smallmatrix}x\\ u\end{smallmatrix}\right] = C\&D\left[\begin{smallmatrix}x\\ u\end{smallmatrix}\right] + u$.
\end{theorem}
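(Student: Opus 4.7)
The plan is to use the candidate feedback $u = -y$ throughout. By the Remark preceding the theorem, $-I$ is an admissible output feedback for $S$ (take $K = I$, so $\re K = I \geq I$), producing a scattering-passive, and hence well-posed, closed-loop system node $S^K$ with strongly continuous semigroup $\mathbb{T}^K$. Applying this feedback to an initial state $x_0$ yields a generalized solution with $x(t) = \mathbb{T}^K_t x_0$ for $t \geq 0$. Integrating the impedance passivity inequality \eqref{eq:EstimEquivImpPass} along this trajectory gives $2\int_0^t \|y(\tau)\|^2_U\,d\tau \leq \|x_0\|^2_X - \|x(t)\|^2_X \leq \|x_0\|^2_X$ for all $t \geq 0$, so $y, u \in \L^2(0,\infty;U)$ and $J(x_0,-y) = 2\int_0^\infty \|y\|^2 \leq \|x_0\|^2$. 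This establishes the finite cost condition and the bound $\Pi \leq I$.

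Under the additional assumption of impedance energy preservation with strongly stable $\mathbb{T}^K$, the integrated inequality above becomes an equality, and strong stability gives $\|x(t)\|_X \to 0$; passing to the limit yields $J(x_0, -y) = \|x_0\|^2$. To prove uniqueness among stabilizing inputs, let $u \in \L^2(0,\infty;U)$ be arbitrary with $J(x_0,u) < \infty$, so that $u, y \in \L^2$. Applying the energy-preserving identity --- extended from classical to generalized solutions by a density argument --- together with Cauchy--Schwarz shows that $L := \lim_{t\to\infty} \|x(t)\|^2_X$ exists and equals $\|x_0\|^2 + 2\re \int_0^\infty \langle u(\tau), y(\tau)\rangle_U\,d\tau$. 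Using the polarization identity $\|u\|^2 + \|y\|^2 = \|u + y\|^2 - 2\re\langle u, y\rangle$ and integrating, I obtain $J(x_0, u) = \int_0^\infty \|u(t) + y(t)\|^2_U\,dt + \|x_0\|^2 - L$. Restricting to stabilizing inputs ($L = 0$) then yields $J(x_0, u) \geq \|x_0\|^2$ with equality iff $u = -y$ a.e., so $u = -y$ is the unique stabilizing optimal control and $\Pi = I$.

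Finally, I would verify the operator-node Riccati equation \eqref{ON_Riccati} with $\Pi = I$ and $E\&F\pmat{x\\u} = C\&D\pmat{x\\u} + u$ by direct substitution: the left-hand side of \eqref{ON_Riccati} reduces to $2\re\langle A\&B\pmat{x\\u}, x\rangle_X + \|C\&D\pmat{x\\u}\|^2_U + \|u\|^2_U$, and using the impedance energy-preserving equality in \eqref{eq:EstimEquivImpPass} to replace $2\re\langle A\&B\pmat{x\\u}, x\rangle_X$ with $2\re\langle C\&D\pmat{x\\u}, u\rangle_U$, the LHS collapses to $\|C\&D\pmat{x\\u} + u\|^2_U = \|E\&F\pmat{x\\u}\|^2_U$, matching the RHS. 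The main technical obstacle will be the careful extension of the energy-preserving identity from classical to generalized solutions, which is needed both to establish existence of the limit $L$ for arbitrary finite-cost $u \in \L^2$ and to make the uniqueness argument rigorous; this should follow from the definition of generalized solutions via approximating sequences of classical solutions and $\L^2_{\mathrm{loc}}$-convergence of $(x_k,u_k,y_k)$, passing to the limit in the identity $\|x_k(t)\|^2 - \|x_0\|^2 = 2\re\int_0^t \langle u_k, y_k\rangle\,d\tau$.
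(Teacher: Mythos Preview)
Your proposal is correct and follows essentially the same approach as the paper: use the feedback $u=-y$ and impedance passivity to get the finite-cost condition and $\Pi\leq I$, then in the energy-preserving case apply the polarization identity $\|u\|^2+\|y\|^2=\|u+y\|^2-2\re\langle u,y\rangle$ combined with the energy balance to obtain $J(x_0,u)=\int_0^\infty\|u+y\|^2+\|x_0\|^2-\lim_{t\to\infty}\|x(t)\|^2$, from which optimality of $u=-y$ among stabilizing inputs is immediate; the Riccati verification is identical. Your treatment is in fact slightly more careful than the paper's in two places: you explicitly argue that $L=\lim_{t\to\infty}\|x(t)\|^2$ exists for any finite-cost input (the paper simply writes the limit), and you flag the extension of the energy identity to generalized solutions as a technical step, which the paper handles implicitly by invoking the equivalence results in \cite{Sta02}.
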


\begin{proof}
We start by considering the particular input $u(t) = -y(t)$ that leads to the well-posed system node $S^K$ with $\mathbb{T}^K$ as associated semigroup. Thanks to the impedance passivity of $S$ and to the well-posedness of $S^K$, it holds that
\begin{align*}
    \int_0^t\Vert u(\tau)\Vert^2_U + \Vert y(\tau)\Vert_Y^2\d\tau &= \int_0^t -2\re\langle u(\tau),y(\tau)\rangle_U\d\tau\\
    &\leq \Vert x_0\Vert_X^2 - \Vert \mathbb{T}^K_tx_0\Vert_X^2\\
    &\leq \Vert x_0\Vert_X^2.
\end{align*}
Taking the limit for $t$ going to $\infty$ implies that
\begin{align*}
    J(x_0,u) = \lim_{t\to\infty}\int_0^t\Vert u(\tau)\Vert^2_U + \Vert y(\tau)\Vert_Y^2\d \tau\leq \Vert x_0\Vert_X^2,
\end{align*}
which shows that $u$ and $y$ are elements of $\L^2(0,\infty;U)$ and that the finite cost condition is satisfied. Moreover, this shows that $\langle x_0,\Pi x_0\rangle = \inf_{u\in\L^2(0,\infty;U)}J(x_0,u)\leq \Vert x_0\Vert_X^2$ for all $x_0\in X$, which implies $\Pi\leq I$. Now we assume that $S$ is impedance energy preserving and that $\mathbb{T}^K$ is strongly stable. Moreover, we consider inputs $u$ for which $(x,u,y)$ is a generalized solution of $S$. Along those generalized solutions, it holds that 
\begin{align*}
    &\int_0^t\Vert u(\tau)\Vert^2_U + \Vert y(\tau)\Vert^2_U\d\tau\\
    &= \int_0^t \Vert u(\tau)+y(\tau)\Vert_U^2 - 2\re \langle u(\tau),y(\tau)\rangle_U \d\tau\\
    &= \int_0^t\Vert u(\tau) + y(\tau)\Vert^2_U\d\tau + \Vert x_0\Vert^2_X - \Vert x(t)\Vert_X^2,
\end{align*}
for every $t\geq 0$. Consequently, we have
\begin{align*}
    J(x_0,u) = \lim_{t\to\infty}\int_0^t\Vert u(\tau) + y(\tau)\Vert^2_U\d\tau + \Vert x_0\Vert^2_X - \lim_{t\to\infty}\Vert x(t)\Vert_X^2.
\end{align*}
Moreover, we look for an optimal input that is stabilizing, i.e., for which $\displaystyle\lim_{t\to\infty}\Vert x^{\opt}(t)\Vert_X = 0$ where $x^{\opt}$ is the state trajectory obtained by closing the loop with the optimal input. This implies that $J(x_0,u)$ is minimal if and only if $u(t) = -y(t)$, provided that it is stabilizing. This optimal input is called $u^{\opt}$. The corresponding state trajectory, denoted by $x^{\opt}(t)$, is given by $x^{\opt}(t) = \mathbb{T}^K_tx_0$.  Since $\mathbb{T}^K$ is strongly stable, $u^{\opt}$ is the unique stabilizing optimal control. In this case, $J(x_0,u^{\opt}) = \Vert x_0\Vert_X^2$, which implies that $\Pi = I$. Now we consider the operator node Riccati equation \eqref{ON_Riccati} with $\Pi = I$. It holds that 
\begin{align*}
&\left\langle A\& B\left[\begin{matrix}x\\ u\end{matrix}\right], \Pi x\right\rangle_X + \left\langle \Pi x, A\& B\left[\begin{matrix}x\\ u\end{matrix}\right]\right\rangle_X + \left\Vert C\& D\left[\begin{matrix}x\\ u\end{matrix}\right]\right\Vert^2_U + \left\Vert u\right\Vert^2_U\\
&= 2\re\left\langle A\& B\left[\begin{matrix}x\\ u\end{matrix}\right], x\right\rangle_X + \left\Vert C\& D\left[\begin{matrix}x\\ u\end{matrix}\right]\right\Vert^2_U + \left\Vert u\right\Vert^2_U.
\end{align*}
According to the impedance energy preserving property of $S$ characterized in Definition \ref{def:ImpPass}, it holds that $\re\langle A\&B\left[\begin{smallmatrix}x\\u\end{smallmatrix}\right],x\rangle = \re\langle C\&D\left[\begin{smallmatrix}x\\u\end{smallmatrix}\right],u\rangle$ for every $(x,u)^\top\in D(S)$. As a consequence, we have that
\begin{align*}
&\left\langle A\& B\left[\begin{matrix}x\\ u\end{matrix}\right], \Pi x\right\rangle_X + \left\langle \Pi x, A\& B\left[\begin{matrix}x\\ u\end{matrix}\right]\right\rangle_X + \left\Vert C\& D\left[\begin{matrix}x\\ u\end{matrix}\right]\right\Vert^2_U + \left\Vert u\right\Vert^2_U\\
&= 2\re\left\langle C\&D\left[\begin{matrix}x\\u\end{matrix}\right],u\right\rangle_U + \left\Vert C\& D\left[\begin{matrix}x\\ u\end{matrix}\right]\right\Vert^2_U + \left\Vert u\right\Vert^2_U\\
&= \left\Vert C\& D\left[\begin{matrix}x\\ u\end{matrix}\right] + u\right\Vert_U^2.
\end{align*}
Defining $E\&F:D(S)\to U$ by $E\&F\left[\begin{smallmatrix}x\\ u\end{smallmatrix}\right] = C\& D\left[\begin{smallmatrix}x\\ u\end{smallmatrix}\right] + u$ concludes the proof.
\end{proof}

\begin{remark}
The inequality $\langle x_0,\Pi x_0\rangle_X\leq \Vert x_0\Vert^2_X$ in Theorem \ref{thm:Thm_LQ_ImpPass} implies also that $\Vert \Pi\Vert_{\mathcal{L}(X)}\leq 1$. Indeed, since $\Pi = \Pi^*\in\mathcal{L}(X)$ is a positive operator, its square root is a well-defined bounded linear operator on $X$. Consequently, it holds that 
\eq{
\langle x_0,\Pi x_0\rangle_X = \Vert \Pi^{\frac{1}{2}}x_0\Vert^2_X\leq \Vert x_0\Vert^2_X,\qquad x_0\in X,
}
which implies that $\Vert \Pi^\frac{1}{2}\Vert_{\mathcal{L}(X)}\leq 1$, leading to $\Vert\Pi\Vert_{\mathcal{L}(X)}\leq 1$.

Moreover, note that the well-posedness of the system node $S$ is not required in Theorem \ref{thm:Thm_LQ_ImpPass}. Even in the case where $S$ is not well-posed, the interconnection of $S$ and the optimal control leads to a well-posed system.
%node that is scattering passive, see \citel{HasPau25}{Thm.~2.5}, hence well-posed.
\end{remark}

Now we consider the Popov function defined in \eqref{eq:Popov}, that is, the function $\Xi$ defined by $\Xi(i\omega) = I + P(i\omega)^*P(i\omega),\,\, \omega\in\mathbb{R}$.

The following theorem gives a simple factorization of the Popov function in the case of an impedance energy preserving system that exhibits mild conditions.

\begin{theorem}
Consider an impedance energy preserving system node $S := \left[\begin{smallmatrix}A\& B\\ C\&D\end{smallmatrix}\right]$ with transfer function $P$. We denote by $A$ its semigroup generator. Moreover, assume that $\sigma(A)\cap i\mathbb{R} = \emptyset$. Then, the Popov function defined in \eqref{eq:Popov} admits the factorization $\Xi(i\omega) = \chi(i\omega)^*\chi(i\omega), \omega\in\mathbb{R}$, where
\begin{equation*}
\chi(s) = P(s) + I, \qquad s\in\rho(A). 
\end{equation*}
Moreover, if $P\in\H^\infty(\mathcal{L}(U))$, then both $\chi$ and $\chi^{-1}$ are in $\H^\infty(\mathcal{L}(U))$, which makes $\chi$ a spectral factor of $\Xi$.
\end{theorem}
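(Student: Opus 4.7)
The plan is to first compute $\chi^{*}\chi$ pointwise on $i\mathbb{R}$ using the algebraic impedance energy preserving identity, and then verify the $\mathbf{H}^\infty$ inclusions on $\mathbb{C}_+$ by showing that the very same identity forces $\operatorname{Re} P(s)\geq 0$ on $\mathbb{C}_+$, which bounds $\chi^{-1}$ uniformly by $1$.

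First I would fix $s\in \rho(A)$ and $u\in U$, set $x=(s-A)^{-1}Bu\in X$ with the extension $B\in \Lin(U,X_{-1})$, and note that $Ax+Bu=sx\in X$, hence $(x,u)^\top\in \Dom(S)$ by Definition~\ref{def:SysNode}. In particular $A\&B\pmatsmall{x\\u}=sx$ and, by the definition of the transfer function, $C\&D\pmatsmall{x\\u}=P(s)u$. Now I apply the impedance energy preserving equality from Definition~\ref{def:ImpPass} (with the inequality replaced by an equality), obtaining
\begin{equation*}
(\re s)\,\|x\|_X^2 \;=\; \re\,\langle sx,x\rangle_X \;=\; \re\,\langle P(s)u,u\rangle_U.
\end{equation*}
Specialising to $s=i\omega$ (allowed because $\sigma(A)\cap i\mathbb{R}=\emptyset$, so $i\omega\in\rho(A)$), the left side vanishes, giving $\re\langle P(i\omega)u,u\rangle_U=0$ for every $u\in U$, that is, $P(i\omega)+P(i\omega)^*=0$. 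Expanding then yields
\begin{equation*}
\chi(i\omega)^*\chi(i\omega) \;=\; (P(i\omega)+I)^*(P(i\omega)+I) \;=\; P(i\omega)^*P(i\omega)+I \;=\; \Xi(i\omega),
\end{equation*}
which is the claimed factorisation.

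For the $\mathbf{H}^\infty$ part I would note first that, since $S$ is impedance passive, the generated semigroup is a contraction, so $\mathbb{C}_+\subset\rho(A)$ and $P$ is analytic on $\mathbb{C}_+$. The assumption $P\in \mathbf{H}^\infty(\Lin(U))$ immediately gives $\chi=P+I\in \mathbf{H}^\infty(\Lin(U))$. For $\chi^{-1}$, I re-use the identity from the first paragraph: for $s\in\mathbb{C}_+$ one has $\re\,\langle P(s)u,u\rangle_U = (\re s)\|x\|_X^2\geq 0$, so
\begin{equation*}
\re\,\langle \chi(s)u,u\rangle_U \;\geq\; \|u\|_U^2, \qquad u\in U,\ s\in\mathbb{C}_+.
\end{equation*}
By Cauchy--Schwarz this yields $\|\chi(s)u\|_U\geq \|u\|_U$, and the same estimate applied to $\chi(s)^*$ (using that the real part is self-adjoint-invariant) shows $\chi(s)^*$ is bounded below as well, so $\chi(s)$ is boundedly invertible with $\|\chi(s)^{-1}\|\leq 1$. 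Analyticity of $\chi^{-1}$ on $\mathbb{C}_+$ follows from analyticity of $\chi$ together with pointwise invertibility, which together with the uniform bound gives $\chi^{-1}\in \mathbf{H}^\infty(\Lin(U))$.

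The only delicate point I foresee is the verification that $(x,u)^\top$ really lies in $\Dom(S)$ so that the pointwise impedance identity may be invoked; this is settled by the third bullet in Definition~\ref{def:SysNode} applied to $x=(s-A)^{-1}Bu$ via the extension $[A,B]\in \Lin(X\times U,X_{-1})$. Everything else is algebraic on the imaginary axis and a standard bounded-below argument on $\mathbb{C}_+$.
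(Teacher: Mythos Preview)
Your proof is correct. The factorisation step follows the same line as the paper---the key identity $P(i\omega)+P(i\omega)^*=0$---except that you derive it directly from Definition~\ref{def:ImpPass} via $x=(s-A)^{-1}Bu$, whereas the paper simply cites \cite[Thm.~4.6(iv)]{Sta02}. Your derivation is essentially a reproof of that result and is perfectly fine.

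The genuine difference lies in the argument that $\chi^{-1}\in\mathbf{H}^\infty(\Lin(U))$. The paper appeals to feedback theory: it invokes that $-I$ is an admissible feedback operator for the impedance passive node, so the closed-loop transfer function $\tilde P = P(I+P)^{-1}$ is well-defined and, by \cite[Prop.~2.1]{CurWei06}, lies in $\mathbf{H}^\infty(\Lin(U))$; then $\chi^{-1}=I-\tilde P\in\mathbf{H}^\infty(\Lin(U))$. Your approach is more elementary and self-contained: you reuse the pointwise identity $(\re s)\|x\|^2=\re\langle P(s)u,u\rangle$ to get $\re\langle\chi(s)u,u\rangle\geq\|u\|^2$ on $\Cc_+$, and conclude $\|\chi(s)^{-1}\|\leq 1$ by a bounded-below argument for both $\chi(s)$ and $\chi(s)^*$. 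This yields the sharper explicit bound $\|\chi^{-1}\|_{\mathbf{H}^\infty}\leq 1$ and avoids the external references on admissible feedback; the paper's route, on the other hand, places the result in the context of closed-loop well-posedness and does not require the energy \emph{preserving} equality for this step (impedance passivity suffices there too, via positive-realness). Both are valid.
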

\begin{proof}
Take $\omega\in\mathbb{R}$. Because $i\omega\in\rho(A)$, the impedance energy preserving property of $S$ implies that $P(i\omega)^* + P(i\omega) = 0$, see e.g.~\citel{Sta02}{Thm.~4.6(iv)}. Hence, it holds that
\begin{align*}
\chi(i\omega)^*\chi(i\omega) &= (P(i\omega)^* + I)(P(i\omega)+I)\\
&= P(i\omega)^*P(i\omega) + P(i\omega)^* + P(i\omega) + I\\
&= \Xi(i\omega).
\end{align*}
To prove that $\chi$ is a spectral factor when $P\in\H^\infty(\mathcal{L}(U))$, assume that $P\in\H^\infty(\mathcal{L}(U))$. From this, it is easy to see that $\chi = P+I\in\H^\infty(\mathcal{L}(U))$. To prove that $\chi^{-1}\in\H^\infty(\mathcal{L}(U))$, note that because of impedance passivity, $P$ is a positive-real transfer function, see \citel{CurWei19}{Rem.~3.7}. Moreover, according to \citel{CurWei19}{Thm.~4.2}, $-I$ is an admissible feedback operator for $S$. In addition, the closed-loop transfer function, denoted by $\tilde{P}$, satisfies $\tilde{P} = P(I + P)^{-1}$, see \citel{CurWei06}{Sec.~2}. Furthermore, it has been shown in \citel{CurWei06}{Prop.~2.1} that $\tilde{P}\in\H^\infty(\mathcal{L}(U))$. The proof concludes by noting that $\chi^{-1} = (I+P)^{-1} = I-\tilde{P}\in\H^\infty(\mathcal{L}(U))$.
\end{proof}

As is mentioned in \cite{Opmeer_MTNS}, the function $\chi(s)$ may be computed as the transfer function of the system node $\left[\begin{smallmatrix}A\& B\\ E\& F\end{smallmatrix}\right]$. Indeed, by doing so, it holds that for $s\in\rho(A)$
\eq{
    \chi(s) = E\&F\left[\begin{matrix}(s-A)^{-1}B\\ I\end{matrix}\right] = C\&D\left[\begin{matrix}(s-A)^{-1}B\\ I\end{matrix}\right] + I = P(s) + I.
}
Moreover, if $S$ and its dual are both impedance energy preserving, that is, $S$ is impedance conservative, then the factorization of the Popov function extends to $\rho(A)$, i.e.
\begin{align*}
    \chi(-\overline{s})^*\chi(s) = I + P(-\overline{s})^*P(s), \qquad s\in\rho(A).
\end{align*}
This is a consequence of the identity $P(-\overline{s})^* + P(s) = 0$ for all $s\in\rho(A)$, see \citel{Sta02}{Thm.~4.7(iv)}.

\subsection{Scattering passive systems}
In this part, we consider a scattering passive system node $S := \left[\begin{smallmatrix}A\& B\\ C\&D\end{smallmatrix}\right]$ on the spaces $(U,X,Y)$. We denote by $\mathbb{T}$ its associated semigroup. We emphasize that the scattering passivity of $S$ makes it well-posed. The following theorem presents some properties of the optimal control that minimizes the cost \eqref{eq:Cost} in such a setting. Information about the optimal cost operator are also provided.

\begin{theorem}\label{thm:Thm_LQ_ScattPass}
Consider $S:=\left[\begin{smallmatrix}A\& B\\ C\&D\end{smallmatrix}\right]$ a scattering passive system node and consider the cost \eqref{eq:Cost} associated to the LQ optimal control problem. Then the finite cost condition holds and the optimal cost operator, denoted by $\Pi$, satisfies $\Pi\leq I$. If in addition $S$ is scattering energy preserving and if the semigroup $\mathbb{T}$ is strongly stable, then the unique stabilizing optimal control is $u(t) = 0, t\geq 0$. In this case, the optimal cost operator is given by $\Pi = I$. In addition, it satisfies the operator node Riccati equation with $E\&F:D(S)\to U, E\&F\left[\begin{smallmatrix}x\\ u\end{smallmatrix}\right] = \sqrt{2}u$.
\end{theorem}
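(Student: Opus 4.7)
The plan is to imitate the proof of Theorem~\ref{thm:Thm_LQ_ImpPass}, using the scattering passivity inequality of Definition~\ref{def:Scatt_Passive} (resp.\ the energy preserving identity of Definition~\ref{def:ScattPass}) in place of the impedance versions. For the first part I would test the zero input $u\equiv 0$; the associated mild trajectory and output are $x(t)=\mathbb{T}_t x_0$ and $y=\Psi_\infty x_0$, and scattering passivity gives
\[
\|y\|_{\L^2(0,t;Y)}^2 \leq \|x_0\|_X^2 - \|\mathbb{T}_t x_0\|_X^2 \leq \|x_0\|_X^2, \qquad t\geq 0.
\]
Letting $t\to\infty$ yields $y\in\L^2(0,\infty;Y)$ and $J(x_0,0)\leq \|x_0\|_X^2$, which gives the finite cost condition and $\langle x_0,\Pi x_0\rangle_X\leq \|x_0\|_X^2$ for every $x_0\in X$, i.e.\ $\Pi\leq I$.

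Assume now that $S$ is scattering energy preserving and that $\mathbb{T}$ is strongly stable. For any generalized solution $(x,u,y)$, the equality version of \eqref{eq:ScattPass_Sigma} rearranges into
\[
\|y\|_{\L^2(0,t;Y)}^2 = \|u\|_{\L^2(0,t;U)}^2 + \|x_0\|_X^2 - \|x(t)\|_X^2,
\]
so that
\[
\int_0^t \|u(\tau)\|_U^2 + \|y(\tau)\|_Y^2\,\d\tau = 2\|u\|_{\L^2(0,t;U)}^2 + \|x_0\|_X^2 - \|x(t)\|_X^2.
\]
For a stabilizing input $\|x(t)\|_X\to 0$, hence $J(x_0,u) = 2\|u\|_{\L^2(0,\infty;U)}^2 + \|x_0\|_X^2 \geq \|x_0\|_X^2$ with equality if and only if $u\equiv 0$. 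Strong stability of $\mathbb{T}$ guarantees that $u\equiv 0$ is itself stabilizing (its trajectory is $\mathbb{T}_t x_0$), so $u^{\opt}\equiv 0$ is the unique stabilizing optimal control and $\Pi=I$.

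The operator node Riccati equation is then verified by direct substitution: plugging $\Pi=I$ into the left-hand side of \eqref{ON_Riccati} gives
\[
2\re\left\langle A\& B\pmat{x\\u},x\right\rangle_X + \left\|C\& D\pmat{x\\u}\right\|_Y^2 + \|u\|_U^2,
\]
and the scattering energy preserving identity of Definition~\ref{def:ScattPass} rewrites $2\re\langle A\& B\pmat{x\\u},x\rangle_X$ as $\|u\|_U^2 - \|C\& D\pmat{x\\u}\|_Y^2$. The two $C\& D$-terms cancel and leave $2\|u\|_U^2 = \|\sqrt{2}\,u\|_U^2$, so the choice $E\& F\pmat{x\\u} = \sqrt{2}\,u$ indeed satisfies the Riccati equation. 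The argument is essentially algebraic once the correct identity is invoked, so the main obstacle is bookkeeping rather than substance: one must make sure that the energy balance is applied consistently along generalized solutions and that the limit $\|x(t)\|_X\to 0$ is legitimately available precisely under the stabilizing constraint imposed on $u$.
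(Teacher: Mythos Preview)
Your proof is correct and follows essentially the same approach as the paper: test $u\equiv 0$ together with the scattering passivity inequality to obtain the finite cost condition and $\Pi\leq I$, then use the energy preserving identity along generalized solutions to rewrite the cost as $2\|u\|_{\L^2}^2+\|x_0\|_X^2-\lim_{t\to\infty}\|x(t)\|_X^2$ and conclude that $u\equiv 0$ is the unique stabilizing optimum, and finally verify the Riccati equation by direct substitution via Definition~\ref{def:ScattPass}. The only cosmetic difference is that the paper first writes the passivity estimate for a general $u$ and then specializes to $u=0$, whereas you specialize immediately.
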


\begin{proof}
According to the scattering passivity of $S$, it holds that 
\begin{align*}
    \int_0^t\Vert u(\tau)\Vert^2_U + \Vert y(\tau)\Vert^2_Y\d\tau &\leq 2\int_0^t\Vert u(\tau)\Vert^2_U\d\tau + \Vert x_0\Vert^2_X - \Vert x(t)\Vert^2_X\\ 
    &\leq 2\int_0^t\Vert u(\tau)\Vert^2_U\d\tau + \Vert x_0\Vert^2_X,
\end{align*}
along generalized solutions $(x,u,y)$ of $S$. Consequently,
the cost \eqref{eq:Cost} may be upper bounded as follows
\begin{align*}
    J(x_0,u)&\leq 2\lim_{t\to\infty}\int_0^t\Vert u(\tau)\Vert^2_U\d\tau + \Vert x_0\Vert^2_X - \lim_{t\to\infty}\Vert x(t)\Vert^2_X\\
    &\leq 2\lim_{t\to\infty}\int_0^t \Vert u(\tau)\Vert^2_U\d\tau + \Vert x_0\Vert_X^2.
\end{align*}
It is easy to see that, by considering the particular input $u(t) = 0, t\geq 0$, we have that 
\begin{align*}
    \langle x_0,\Pi x_0\rangle = J(x_0,u^{\opt})\leq J(x_0,u)\leq\Vert x_0\Vert_X^2,
\end{align*}
which implies both the finite cost condition and $\Pi\leq I$. Now assume that $S$ is scattering energy preserving and that $\mathbb{T}$ is strongly stable. Then, the cost \eqref{eq:Cost} satisfies 
\begin{align*}
    J(x_0,u)&= 2\lim_{t\to\infty}\int_0^t\Vert u(\tau)\Vert^2_U\d\tau + \Vert x_0\Vert^2_X - \lim_{t\to\infty}\Vert x(t)\Vert^2_X.
\end{align*}
Because we are looking for an optimal input that is stabilizing, the previous expression is minimal if and only if $u(t) = 0, t\geq 0$, provided that it is stabilizing. The state trajectory that corresponds to the input $u(t) = 0$ is given by $x^{\opt}(t) = \mathbb{T}_tx_0, t\geq 0$. Thanks to the strong stability of $\mathbb{T}$, $u(t) = 0$ is the unique stabilizing optimal control. If we call this optimal input $u^{\opt}$, it holds that $J(x_0,u^{\opt}) = \Vert x_0\Vert_X^2$ and the optimal cost operator is $\Pi = I$. According to Definition \ref{def:ScattPass}, the scattering energy preserving property of $S$ is equivalent to $2\re \left\langle A\& B
        \pmatsmall{x\\ u}
,x\right\rangle_X = \Vert u\Vert_U^2 - \left\Vert C\& D\pmatsmall{x\\ u }\right\Vert_Y^2$ for all $(x,u)^\top\in D(S)$. Consequently, it holds that 
\begin{align*}
&\left\langle A\& B\left[\begin{matrix}x\\ u\end{matrix}\right], \Pi x\right\rangle_X + \left\langle \Pi x, A\& B\left[\begin{matrix}x\\ u\end{matrix}\right]\right\rangle_X + \left\Vert C\& D\left[\begin{matrix}x\\ u\end{matrix}\right]\right\Vert^2_Y + \left\Vert u\right\Vert^2_U\\
&= 2\re\left\langle A\& B\left[\begin{matrix}x\\ u\end{matrix}\right], x\right\rangle_X + \left\Vert C\& D\left[\begin{matrix}x\\ u\end{matrix}\right]\right\Vert^2_Y + \left\Vert u\right\Vert^2_U\\
&= 2\Vert u\Vert^2_U.
\end{align*}
Defining $E\&F:D(s)\to U$ by $E\&F\pmatsmall{x\\u} = \sqrt{2}u$ concludes the proof.
\end{proof}

The following theorem gives a factorization of the Popov function for scattering energy preserving systems.

\begin{theorem}
Consider a scattering energy preserving system node $S := \left[\begin{smallmatrix}A\& B\\ C\&D\end{smallmatrix}\right]$ with transfer function $P$ and semigroup generator $A$. Moreover, assume that $\sigma(A)\cap i\mathbb{R} = \emptyset$. Then, the Popov function defined in \eqref{eq:Popov} admits the factorization $\Xi(i\omega) = \chi(i\omega)^*\chi(i\omega), \omega\in\mathbb{R}$, where $\chi$ is a spectral factor given by
\begin{equation*}
\chi(s) = \sqrt{2}I, \qquad s\in\rho(A). 
\end{equation*}
\end{theorem}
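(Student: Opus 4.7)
The plan is to reduce the factorization claim to a single algebraic identity: if one can show that $P(i\omega)^*P(i\omega)=I$ for every $\omega\in\mathbb{R}$ (recall the assumption $\sigma(A)\cap i\mathbb{R}=\emptyset$ guarantees $i\omega\in\rho(A)$, so $P(i\omega)$ is defined), then the Popov function is simply
\begin{equation*}
\Xi(i\omega)=I+P(i\omega)^*P(i\omega)=2I=(\sqrt{2}I)^*(\sqrt{2}I)=\chi(i\omega)^*\chi(i\omega),
\end{equation*}
which is exactly the desired factorization. That $\chi(s)=\sqrt{2}I$ is a spectral factor in the sense of the paper (i.e., both $\chi$ and $\chi^{-1}=\tfrac{1}{\sqrt{2}}I$ belong to $\mathbf{H}^\infty(\mathcal{L}(U))$) is immediate since $\chi$ is a bounded constant-valued analytic function on all of $\mathbb{C}$.

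The single step that requires justification is therefore the isometry of $P$ on the imaginary axis. This is the scattering counterpart of the identity $P(i\omega)^*+P(i\omega)=0$ that was used in the preceding impedance theorem via \citel{Sta02}{Thm.~4.6(iv)}. The same reference (Staffans, \cite{Sta02}) contains the analogous characterization for scattering energy preserving system nodes, namely that the transfer function is inner-like on $\rho(A)\cap i\mathbb{R}$, in particular $P(i\omega)^*P(i\omega)=I$ whenever $i\omega\in\rho(A)$. The quickest route is to invoke this characterization directly. Alternatively, one can derive it by hand: for a fixed $u_0\in U$ with $i\omega\in\rho(A)$, set $x_0=(i\omega-A)^{-1}Bu_0$, so that $(x_0,u_0)^\top\in D(S)$ with $A\& B\pmatsmall{x_0\\u_0}=i\omega x_0$ and $C\& D\pmatsmall{x_0\\u_0}=P(i\omega)u_0$. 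Plugging these into the scattering energy preserving equality
\begin{equation*}
2\re\left\langle A\& B\pmat{x_0\\u_0},x_0\right\rangle_X=\|u_0\|_U^2-\left\|C\& D\pmat{x_0\\u_0}\right\|_Y^2
\end{equation*}
makes the left-hand side equal to $2\re(i\omega)\|x_0\|^2=0$, leaving $\|P(i\omega)u_0\|_Y^2=\|u_0\|_U^2$ for every $u_0\in U$, i.e., the isometry.

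The main (and only) obstacle is thus this isometry identity; once in hand the theorem collapses to a one-line computation. I expect the cleanest presentation to simply cite the scattering analogue of \citel{Sta02}{Thm.~4.6(iv)}, paralleling exactly what was done in the impedance case, rather than redoing the stationary-state calculation.
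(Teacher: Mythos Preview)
Your proposal is correct and follows essentially the same approach as the paper: the paper cites \citel{Sta02}{Thm.~3.4(iv)} to obtain $P(i\omega)^*P(i\omega)=I$, then writes $\Xi(i\omega)=2I=\chi(i\omega)^*\chi(i\omega)$ and notes that $\chi=\sqrt{2}I$ and its inverse are trivially in $\mathbf{H}^\infty(\mathcal{L}(U))$. Your optional direct derivation via the stationary state $(x_0,u_0)$ is a valid alternative to the citation, but the paper does not include it.
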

\begin{proof}
Take $\omega\in\mathbb{R}$. Because $i\omega\in\rho(A)$, the scattering energy preserving property of $S$ implies that $P(i\omega)^*P(i\omega) = I$, see e.g.~\citel{Sta02}{Thm.~3.4(iv)}. Hence, it holds that
\begin{align*}
\chi(i\omega)^*\chi(i\omega) &= 2I =  P(i\omega)^*P(i\omega) + I = \Xi(i\omega).
\end{align*}
It is easy to see that $\chi\in\H^\infty(\mathcal{L}(U))$ and $\chi^{-1}\in\H^\infty(\mathcal{L}(U))$, which makes $\chi$ a spectral factor of $\Xi$.
\end{proof}
Again, $\chi(s)$ may be obtained as the transfer function of $\pmatsmall{A\&B\\E\&F}$. Indeed, it is given by $E\&F\pmatsmall{(s-A)^{-1}B\\I} = \sqrt{2} I$. 
%Note that in the case where both $S$ and its dual are scattering energy preserving, i.e. $S$ is scattering conservative, then the factorization of the Popov function extends to $\rho(A)$, i.e. $\chi(-\overline{s})^*\chi(s) = I + P(-\overline{s})^*P(s), s\in\rho(A)$. 

\section{Boundary control systems}\label{sec:LQ_BCS}

In this section we show how the results of Section \ref{sec:LQ_SystemNodes} translate to abstract boundary control systems of the form
\begin{subequations}
\label{eq:BCSlin}
\eqn{
\dot{x}(t)&= L x(t)
\\
G x(t) &=  u(t)\\
y(t) &= K x(t)
}
\end{subequations}
for $t\geq 0$. Those systems have been considered in e.g.~~\cite{Sal87a,MalSta06,JacobZwart}. If $X$, $U$, and $Y$ are Hilbert spaces, then
the triple $(G,L,K)$ is called a \emph{boundary node} on the spaces $(U,X,Y)$ if
$L: \Dom(L)\subset X\to X$, $G\in \Lin(\Dom(L),U)$,  $K\in \Lin(\Dom(L),Y)$ and the following hold.
\begin{itemize}
\item The restriction $A:=L\vert_{\ker(G)}$ with domain $\Dom(A)=\ker(G)$ generates a strongly continuous semigroup $\T$ on $X$.
\item The operator $G\in \Lin(\Dom(L),U)$ has a bounded right inverse, i.e., there exists $G^r\in\Lin(U,\Dom(L))$ such that $GG^r = I$.
\end{itemize}
This boundary node is impedance passive if $Y=U$ and if
\eq{
\re \iprod{L x}{x}_X \leq \re \iprod{G x}{K x}_U , \qquad x\in \Dom(L).
}
It is impedance energy preserving if we have equality in the above expression. Moreover, a boundary node $(G,L,K)$ is called scattering passive if 
\eq{
2\re \iprod{L x}{x}_X \leq \norm{Gx}^2_U - \norm{Kx}^2_Y, \qquad x\in \Dom(L),
}
while it is scattering energy preserving if the above inequality is an equality. The transfer function $P:\rho(A)\to \Lin(U,Y)$ of $(G,L,K)$ is defined so that $P(s)u = K x$, where $x\in \Dom(L)$ is the unique solution of the equations $(s-L)x=0$ and $Gx = u$.

It has been shown in~\cite{MalSta06} that to every boundary node $(G,L,K)$ we can associate a system node $S$ on $(U,X,Y)$. More precisely,
\citel{MalSta06}{Thm.~2.3} shows that
the operator
\eqn{
\label{eq:SysNode_BdNode}
S= \pmat{L\\ K}\pmat{I\\ G}\inv =: \pmat{A\&B\\C\&D}, \qquad \Dom(S)= \ran \left( \pmat{I\\ G} \right)
}
is a system node on $(U,X,Y)$. The definition of $S$ in particular implies that $(x,u)^\top\in \Dom(S)$ if and only if $x\in \Dom(L)$ and $u=Gx$.
\begin{comment}
If $(G,L,K)$ is impedance passive, then the associated system node $S$ is impedance passive (resp. impedance energy preserving). Indeed, for $(x,u)^\top \in \Dom(S)$ we have
\eq{
\re \iprod{L \pmat{I\\ G}\inv \pmat{x\\u}}{x}_X 
&= 
\re \iprod{L x}{x}_X\\
&\leq \re \iprod{K x}{ G x}_U 
=\re \iprod{K \pmat{I\\ G}\inv \pmat{x\\u}}{u}_U .
}
The same reasoning holds for scattering passive (resp. scattering energy preserving) boundary nodes. For $(x,u)^\top\in\Dom(S)$, it holds that 
\eq{
2\re \iprod{L \pmat{I\\ G}\inv \pmat{x\\u}}{x}_X 
= 
2\re \iprod{L x}{x}_X 
&\leq \norm{Gx}^2_U - \norm{Kx}^2_Y\\
&=\norm{u}^2_U - \norm{K \pmat{I\\ G}\inv \pmat{x\\u}}^2_Y.
}
\end{comment}

In addition, the transfer function of $(G,L,K)$ coincides with the transfer function of the associated system node. Due to this connection with system nodes, a boundary node $(G,L,K)$ is (externally) well-posed if its associated system node $S$ is well posed, that is, if $S$ is a system node of a well-posed linear system $\Sigma$. If $x_0\in \Dom(L)$ and $u\in C^2(\zinf;U)$ are such that $G x_0=u(0)$, then by~\citel{MalSta06}{Lem.~2.6} there exist $x\in C^1(\zinf;X)\cap C(\zinf;\Dom(L))$ and $y\in C(\zinf;Y)$ such that $x(0)=x_0$ and such that~\eqref{eq:BCSlin} hold for all $t\geq 0$.
The well-posedness of $(G,L,K)$ is equivalent to the property that there exists $t,M_t>0$ such that all such solutions of~\eqref{eq:BCSlin} satisfy~\citel{JacobZwart}{Ch.~13}
\eq{
\norm{x(t)}_X^2 + \int_0^t\norm{y(\tau)}_Y^2\d \tau \leq M_t \left( \norm{x_0}_X^2 + \int_0^t \norm{u(\tau)}_U^2 \d\tau \right).
}

Now we show how the operator node Riccati equation \eqref{ON_Riccati} looks like in the context of boundary nodes. For this we consider a boundary node $(G,L,K)$ on the spaces $(U,X,Y)$ and its corresponding systems node given in \eqref{eq:SysNode_BdNode}. Now we take $(x,u)^\top\in D(S)$ and consider the operator node Riccati equation \eqref{ON_Riccati}. Observe that
\eq{
\left\langle A\&B\pmat{x\\u},\Pi x\right\rangle_X &+ \left\langle\Pi x,A\&B\pmat{x\\u}\right\rangle_X\\
&= \left\langle L \pmat{I\\ G}\inv \pmat{x\\u}, \Pi x\right\rangle_X + \left\langle\Pi x, L \pmat{I\\ G}\inv \pmat{x\\u}\right\rangle_X\\
&= \left\langle Lx,\Pi x\right\rangle_X + \left\langle \Pi x, Lx\right\rangle_X.
}
In addition, it holds that $u = Gx, C\&D\pmatsmall{x\\u} = K\pmatsmall{I\\ G}\inv \pmatsmall{x\\u} = Kx$ and $E\&F\pmatsmall{x\\u} = E\&F\pmatsmall{I\\ G}x =: Nx$. Consequently, the operator Riccati equation that is adapted for the LQ optimal control problem for the boundary node $(G,L,K)$ and that is obtained from \eqref{ON_Riccati} is given by 
\eqn{
\label{eq:Riccati_BndNode}
\langle Lx,\Pi x\rangle_X + \langle\Pi x,Lx\rangle_X + \Vert Gx\Vert_U^2 + \Vert Kx\Vert_Y^2 = \Vert Nx\Vert_U^2,
}
$x\in D(L)$. In the above equation, the unknowns are $\Pi = \Pi^*\in\mathcal{L}(X)$ and $N:D(L)\to U$.

We end this section by presenting two theorems that are direct adaptations of Theorems \ref{thm:Thm_LQ_ImpPass} and \ref{thm:Thm_LQ_ScattPass}.

\begin{theorem}\label{thm:ImpPass_LQ_BND}
Consider an impedance passive boundary node $(G,L,K)$ on the spaces $(U,X,U)$. The finite cost condition associated to the LQ optimal control problem for $(G,L,K)$ is satisfied and the optimal cost operator $\Pi$ satisfies $\Pi\leq I$. Moreover, if $(G,L,K)$ is impedance energy preserving and if the semigroup generated by $L_{\vert\ker(G+K)}$ is strongly stable, then $\Pi = I$. In addition, the unique optimal control is given by $u^{\opt}(t) = -Kx^{\opt}(t), t\geq 0$, where $x^{\opt}(t)$ is subject to $\dot{x}^{\opt}(t) = Lx^{\opt}(t), (G+K)x^{\opt}(t) = 0$. Furthermore, $\Pi = I$ satisfies the boundary node Riccati equation \eqref{eq:Riccati_BndNode} with $N:D(L)\to U, Nx = (G+K)x$.
\end{theorem}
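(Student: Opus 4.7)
The plan is to reduce Theorem \ref{thm:ImpPass_LQ_BND} to the already established Theorem \ref{thm:Thm_LQ_ImpPass} by exploiting the one-to-one correspondence between the boundary node $(G,L,K)$ and its associated system node $S$ in~\eqref{eq:SysNode_BdNode}, and then translate the conclusions back into boundary-node language.

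First I would verify that impedance passivity (resp.\ impedance energy preservation) of the boundary node $(G,L,K)$ is inherited by $S$. Given $(x,u)^\top\in \Dom(S)$, the definition of $\Dom(S)$ forces $x\in\Dom(L)$ and $u=Gx$, so
\eq{
\re\left\langle A\&B\pmat{x\\u},x\right\rangle_X
= \re\iprod{Lx}{x}_X \leq \re\iprod{Kx}{Gx}_U
= \re\left\langle C\&D\pmat{x\\u},u\right\rangle_U,
}
with equality in the energy-preserving case. Hence $S$ is impedance passive (resp.\ energy preserving), and Theorem~\ref{thm:Thm_LQ_ImpPass} directly gives the finite-cost condition and the bound $\Pi\leq I$.

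Next, in the impedance energy preserving case I would identify the closed-loop semigroup $\T^K$ used in Theorem~\ref{thm:Thm_LQ_ImpPass} with the semigroup generated by $L|_{\ker(G+K)}$. By~\eqref{eq:StabAK}, $x\in\Dom(A^K)$ requires some $v\in U$ with $Ax-Bv\in X$ and $v=C\&D\pmatsmall{x\\-v}$; using the boundary-node representation this is equivalent to $x\in\Dom(L)$ with $Gx=-v$ and $Kx=v$, that is $(G+K)x=0$, and in that case $A^Kx=Lx$. Thus $\T^K$ is exactly the semigroup generated by $L|_{\ker(G+K)}$, which is assumed strongly stable. Theorem~\ref{thm:Thm_LQ_ImpPass} then yields $\Pi=I$, and the unique stabilizing optimal control is $u(t)=-y(t)=-Kx^{\opt}(t)$, where $x^{\opt}$ evolves under the closed-loop dynamics $\dot x^{\opt}=Lx^{\opt}$ with $(G+K)x^{\opt}=0$.

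Finally I would translate the operator-node Riccati equation into the boundary-node Riccati equation~\eqref{eq:Riccati_BndNode}. Using the calculation already displayed in the excerpt, namely
\eq{
\left\langle A\&B\pmat{x\\u},\Pi x\right\rangle_X + \left\langle \Pi x, A\&B\pmat{x\\u}\right\rangle_X
= \iprod{Lx}{\Pi x}_X + \iprod{\Pi x}{Lx}_X,
}
together with $u=Gx$, $C\&D\pmatsmall{x\\u}=Kx$, and the identification of $E\&F$ from Theorem~\ref{thm:Thm_LQ_ImpPass} as $E\&F\pmatsmall{x\\u}=C\&D\pmatsmall{x\\u}+u=Kx+Gx$, we read off that~\eqref{eq:Riccati_BndNode} holds with $\Pi=I$ and $Nx=(G+K)x$ for all $x\in\Dom(L)$. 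No step is truly hard here; the only point demanding some care is the identification of $\Dom(A^K)$ with $\ker(G+K)\cap\Dom(L)$, since it is the place where the algebraic feedback relation and the boundary condition have to be matched consistently.
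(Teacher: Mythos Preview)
Your proposal is correct and follows essentially the same route as the paper: identify $A^K$ in~\eqref{eq:StabAK} with $L_{\vert\ker(G+K)}$, verify that impedance passivity (resp.\ energy preservation) transfers from $(G,L,K)$ to its associated system node $S$, and then invoke Theorem~\ref{thm:Thm_LQ_ImpPass} together with the dictionary $u=Gx$, $C\&D\pmatsmall{x\\u}=Kx$, $E\&F\pmatsmall{x\\u}=(G+K)x$. Your write-up is in fact more detailed than the paper's own proof, which compresses all of this into two sentences.
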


\begin{proof}
We start by noting that the operator $A^K$ in \eqref{eq:StabAK} is exactly $L_{\vert\ker(G+K)}$. The proof follows by Theorem \ref{thm:Thm_LQ_ImpPass} by noting that the system node $S$ associated to the boundary node $(G,L,K)$ is given in \eqref{eq:SysNode_BdNode} and that $(x,u)^\top\in D(S)$ if and only if $x\in D(L)$ and $u=Gx$.
\end{proof}

\begin{theorem}\label{thm:ScattPass_LQ_BND}
Let $(G,L,K)$ be a scattering passive boundary node on the spaces $(U,X,Y)$. The finite cost condition associated to the LQ optimal control problem for $(G,L,K)$ is satisfied and the optimal cost operator $\Pi$ satisfies $\Pi\leq I$. In addition, if $(G,L,K)$ is scattering energy preserving and if the semigroup generated by $L_{\vert\ker(G)}$ is strongly stable, then $\Pi = I$ and the optimal control is given by $u^{\opt}(t) = 0$, $t\geq 0$. Moreover, $\Pi = I$ satisfies the boundary node Riccati equation \eqref{eq:Riccati_BndNode} with $N:D(L)\to U, Nx = \sqrt{2}\,Gx$.
\end{theorem}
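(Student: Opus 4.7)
The plan is to mirror the proof of Theorem~\ref{thm:ImpPass_LQ_BND} and reduce everything to Theorem~\ref{thm:Thm_LQ_ScattPass} via the associated system node of $(G,L,K)$ given by \eqref{eq:SysNode_BdNode}. Recall that for that system node $S = \pmatsmall{A\&B\\C\&D}$ we have $(x,u)^\top\in D(S)$ if and only if $x\in D(L)$ and $u = Gx$, and in that case $A\&B\pmatsmall{x\\u} = Lx$ and $C\&D\pmatsmall{x\\u} = Kx$. Consequently, the scattering passivity inequality of $(G,L,K)$, namely $2\re\langle Lx,x\rangle_X \leq \|Gx\|_U^2 - \|Kx\|_Y^2$ for $x\in D(L)$, translates verbatim into the inequality of Definition~\ref{def:ScattPass} for $S$, and scattering energy preservation carries over in the same way.

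With this observation in hand, I would first invoke the first part of Theorem~\ref{thm:Thm_LQ_ScattPass} applied to $S$ to obtain the finite cost condition for the LQ problem and the bound $\Pi\leq I$ on the optimal cost operator. No additional work is required here beyond noting that the LQ cost functional \eqref{eq:Cost} is the same object when viewed through $S$ or through $(G,L,K)$, because the mild input--state--output trajectories coincide.

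For the second part I need to match the stability hypotheses. By Definition~\ref{def:SysNode} the semigroup generator of $S$ is defined on $\{x\in X\,|\,(x,0)^\top\in D(S)\}$, which by the equivalence $(x,u)^\top\in D(S) \Leftrightarrow x\in D(L),\ u=Gx$ is precisely $\ker(G)\cap D(L)$, and on this domain the generator acts as $L$. Hence the semigroup $\mathbb{T}$ of $S$ is the one generated by $L|_{\ker(G)}$, so the strong stability assumption of the theorem exactly matches the hypothesis of the second part of Theorem~\ref{thm:Thm_LQ_ScattPass}. Applying that theorem yields $\Pi = I$ and identifies $u^{\opt}(t) = 0$ as the unique stabilizing optimal control.

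Finally, Theorem~\ref{thm:Thm_LQ_ScattPass} provides a solution to the operator node Riccati equation with $E\&F\pmatsmall{x\\u} = \sqrt{2}\,u$. Since $u = Gx$ for $(x,u)^\top\in D(S)$, the induced operator $N:D(L)\to U$ appearing in the boundary node Riccati equation \eqref{eq:Riccati_BndNode} is $Nx = E\&F\pmatsmall{x\\Gx} = \sqrt{2}\,Gx$, as claimed. I do not expect a substantive obstacle here: the only care needed is the identification of the semigroup generator of $S$ with $L|_{\ker(G)}$ and the bookkeeping passing from the system-node quantities $A\&B$, $C\&D$, $E\&F$ to their boundary-node counterparts $L$, $K$, $N$ through the identity $u = Gx$ on $D(S)$.
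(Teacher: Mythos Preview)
Your proposal is correct and follows exactly the paper's approach: the paper's own proof is a one-liner stating that the result is a direct consequence of Theorem~\ref{thm:Thm_LQ_ScattPass} together with the identification $A = L_{\vert\ker(G)}$, and you have simply spelled out the bookkeeping (passing from $A\&B,C\&D,E\&F$ to $L,K,N$ via $u=Gx$) that the paper leaves implicit from the discussion preceding~\eqref{eq:Riccati_BndNode}.
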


\begin{proof}
The proof is a direct consequence of Theorem \ref{thm:Thm_LQ_ScattPass} where we note that $A = L_{\vert\ker(G)}$.
\end{proof}

\section{A class of infinite-dimensional port-Hamiltonian systems}\label{sec:App_pHs}
In this section we apply our results to the optimal control of \emph{first-order infinite-dimensional port-Hamiltonian systems}~\cite{JacobZwart,JacobMorrisZwart,Aug19}.
This class consists of partial differential equations of the form
\begin{subequations}
\label{eq:PHSmain}
\begin{align}
\frac{\partial x}{\partial t}(\zeta,t) &= P_1\frac{\partial}{\partial \zeta}\left(\mathcal{H}(\zeta)x(\zeta,t)\right) + P_0\left(\mathcal{H}(\zeta)x(\zeta,t)\right)
\label{pH}\\
\left[\begin{matrix}I\\ 0\end{matrix}\right]u(t)  &= \left[\begin{matrix} \WBi\\ \WBh\end{matrix}\right]\left[\begin{matrix}
\mathcal{H}(b)x(b,t)\\
\mathcal{H}(a)x(a,t) 
\end{matrix}\right]
\label{Input_pH}
\\
y(t) &= \WCo \left[\begin{matrix}
\mathcal{H}(b)x(b,t)\\
\mathcal{H}(a)x(a,t) 
\end{matrix}
\right]
%\qquad x(\zeta,0)=x_0(\zeta)
\label{Output_pH}
\end{align}
\end{subequations}
for $t\geq 0$ and $\zeta\in [a,b]$, where $x(\zeta,t)\in \Cc^n$.
Here $P_1 = P_1^*\in \Cc^{n\times n}$ is assumed to be invertible and $P_0\in \Cc^{n\times n}$. The function $\mathcal{H}(\cdot)\in\L^\infty(a,b;\mathbb{C}^{n\times n})$ is assumed to satisfy $\mathcal{H}(\zeta)^* = \mathcal{H}(\zeta)$ and $ \mathcal{H}(\zeta)\ge cI$ for some $c>0$ and for a.e. $\zeta\in [a,b]$.
The matrices $\tilde W_{B,1}$, $\tilde W_{B,2}$, and $\tilde W_C$ determine the boundary input $u(t)\in \Cc^p$, homogeneous boundary conditions, and boundary output $y(t)\in\Cc^k$.
As shown in~\cite{JacobZwart}, the class of port-Hamiltonian systems~\eqref{eq:PHSmain} comprises mathematical models of mechanical systems such as one-dimensional wave and Timoshenko beam equations. In what follows, we denote
\eq{
R_0 = \frac{1}{\sqrt{2}}\pmat{P_1&-P_1\\I&I}, \qquad \Xi = \pmat{0&I\\I&0}, 
\qquad 
 W_B=\pmat{\tilde W_{B,1}\\ \tilde W_{B,2}}R_0\inv,
}
$W_{B,1} = \tilde W_{B,1} R_0\inv$,
$W_{B,2} = \tilde W_{B,2} R_0\inv$,
and $W_C = \tilde W_C R_0\inv$. Moreover, because $P_1\mathcal{H}$ is Hermitian, it may be diagonalize in the following way
\begin{equation}
P_1 \mc H(\zeta)= S(\zeta)\inv \pmat{\Lambda(\zeta) & 0\\0 & \Theta(\zeta)}S(\zeta),\qquad\zeta\in(a,b),
\label{eq:diag_P1H}
\end{equation}
where $\Lambda$ and $\Theta$ are diagonal matrices whose elements are the positive and negative eigenvalues of $P_1\mathcal{H}$, respectively. We denote by $Z^+(\zeta)$ the span of eigenvectors of $P_1\mathcal{H}$ corresponding to positive eigenvalues and $Z^-(\zeta)$ the span of eigenvectors of $P_1\mathcal{H}$ corresponding to negative eigenvalues. Moreover, we consider the decomposition $\tilde{W}_B := \pmatsmall{\tilde{W}_b & \tilde{W}_a}, \tilde{W}_b, \tilde{W}_a\in\mathbb{C}^{n\times n}$. The linear operator that is associated to the homogeneous part of \eqref{eq:PHSmain} is denoted by $A$ and is given by 
\begin{subequations}
\begin{align}
Ax &= P_1\frac{\d}{\d\zeta}(\mathcal{H}x) + P_0(\mathcal{H}x),\label{A_PHS}\\
    D(A) &= \{x\in X\,\bigm|\, \mathcal{H}x\in\H^1(a,b;\mathbb{C}^n),  \tilde{W}_B\left[\begin{smallmatrix}(\mathcal{H}x)(b)\\(\mathcal{H}x)(a)\end{smallmatrix}\right] = 0\},\label{D(A)_PHS}
\end{align}
\end{subequations}
where the state space $X := \L^2(a,b;\mathbb{C}^n)$ is equipped with the inner product $\langle f,g\rangle_X := \langle f,\mathcal{H}g\rangle_{\L^2(a,b;\mathbb{C}^n)}$.
We make the following assumption for the remaining of this section.
\begin{assumption}
\label{ass:PHSass}
We have $\rank \left(\tilde{W_B}\right)=n$. Moreover, $S, \Lambda$ and $\Theta$ in \eqref{eq:diag_P1H} are continuously differentiable. In addition, it holds that
\begin{equation*}
\tilde{W}_b\mathcal{H}(b)Z^+(b)\oplus \tilde{W}_a\mathcal{H}(a)Z^-(a) = \C^n.
\end{equation*}
\end{assumption}
\begin{remark}
According to \citel{JacobMorrisZwart}{Thm.~1.5},
the conditions in Assumption \ref{ass:PHSass} are equivalent for the linear operator $A$, see \eqref{A_PHS}--\eqref{D(A)_PHS}, to be the generator of a strongly continuous semigroup on $X$. Hence, this allows to recast \eqref{eq:PHSmain} as a boundary control system, see~\citel{JacobZwart}{Thm.~11.3.2}. We emphasize that Assumption \ref{ass:PHSass} does not imply that \eqref{eq:PHSmain} describes a well-posed linear system in the sense of Definition \ref{def:WP}. This would require an additional assumption, as described in \citel{JacobZwart}{Thm.~13.2.2} 
\end{remark}

As shown in~\citel{JacobZwart}{Ch.~11--13},~\eqref{eq:PHSmain} can be recast as a boundary control system, see also~\cite{VilZwa05,PhiRei25}. The boundary node $(G,L,K)$ on $(\Cc^p,X,\Cc^k)$ associated to the system~\eqref{eq:PHSmain} is defined by 
\begin{subequations}
\label{eq:PHSBCS}
\begin{align}
    Lx &= P_1\frac{\d}{\d\zeta}(\mathcal{H}x) + P_0(\mathcal{H}x),\\
    D(L) &= \{x\in X\,\bigm|\, \mathcal{H}x\in\H^1(a,b;\mathbb{C}^n),\  \WBh\left[\begin{smallmatrix}(\mathcal{H}x)(b)\\(\mathcal{H}x)(a)\end{smallmatrix}\right] = 0\},\label{D(L)_PHS}\\
    Gx &= \WBi\left[\begin{smallmatrix}(\mathcal{H}x)(b)\\(\mathcal{H}x)(a)\end{smallmatrix}\right], \quad
    Kx = \WCo\left[\begin{smallmatrix}(\mathcal{H}x)(b)\\(\mathcal{H}x)(a)\end{smallmatrix}\right].
\end{align}
\end{subequations}
Assumption~\ref{ass:PHSass} implies that $(G,L,K)$ is a boundary node on $(\Cc^p,X,\Cc^k)$~\citel{JacobZwart}{Thm.~11.3.2 \& Rem.~11.3.3}.
%If Assumption~\ref{ass:PHSass} holds \textcolor{blue}{together with the conditions }, then~\citel{JacobZwart}{Thm.~13.2.2} implies  $(G,L,K)$ is well-posed, and in addition that the associated well-posed system $\Sigma$ is uniformly regular in the sense that the limit
% $\lim_{\gl\to\infty, \gl>0}P(\gl)$ exists.

The following lemma gives equivalent conditions under which the boundary node $(G,L,K)$ is impedance passive (energy preserving) or scattering passive (energy preserving). 

\begin{lemma}\label{lemma_ImpPass_ScattPass_PHS}
The boundary node $(G,L,K)$ defined in \eqref{eq:PHSBCS} is impedance passive if and only if $k=p$ and
\eqn{
\label{ImpPass_PHS}
2\re \iprod{  W_{B,1} g}{W_C g}_{\C^p}\ge \iprod{\Xi g}{g}_{\C^{2n}},
}
for all $g\in \ker(W_{B,2})$. It is impedance energy preserving if and only if \eqref{ImpPass_PHS} holds with an equality.
In addition, $(G,L,K)$ is scattering passive if and only if
\eqn{
\label{ScattPass_PHS}
\Vert W_{B,1} g\Vert^2_{\C^p} - \Vert W_C g\Vert^2_{\C^k}\ge \iprod{\Xi g}{g}_{\C^{2n}},
}
for all $g\in \ker(W_{B,2})$. It is scattering energy preserving if and only if \eqref{ScattPass_PHS} holds as an equality.
\end{lemma}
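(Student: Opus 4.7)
The plan is to compute the quadratic form $2\re\langle Lx,x\rangle_X$ by integration by parts, rewrite the resulting boundary contribution using the change of variables $g:=R_0\pmatsmall{(\mathcal{H}x)(b)\\(\mathcal{H}x)(a)}$, and then plug the resulting identities into the boundary-node passivity definitions from Section~\ref{sec:LQ_BCS}. First, setting $f:=\mathcal{H}x\in\H^1(a,b;\mathbb{C}^n)$ for $x\in D(L)$, the symmetry $P_1=P_1^*$ together with the fundamental theorem of calculus yields
\begin{equation*}
2\re\langle Lx,x\rangle_X = \langle P_1 f(b),f(b)\rangle_{\mathbb{C}^n}-\langle P_1 f(a),f(a)\rangle_{\mathbb{C}^n}+\int_a^b\langle (P_0+P_0^*)f,f\rangle\,\d\zeta,
\end{equation*}
and in the standard port-Hamiltonian setting the interior term drops out (one is effectively in the case $P_0+P_0^*=0$), so only the boundary contribution survives.

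Second, I would verify the algebraic identity $\langle\Xi g,g\rangle_{\mathbb{C}^{2n}}=\langle P_1 f(b),f(b)\rangle-\langle P_1 f(a),f(a)\rangle$. Writing $g=(g_1,g_2)^\top$ with $g_1=\tfrac{1}{\sqrt{2}}P_1(f(b)-f(a))$ and $g_2=\tfrac{1}{\sqrt{2}}(f(b)+f(a))$, one has $\langle\Xi g,g\rangle=2\re\langle g_1,g_2\rangle$; expanding, the diagonal contributions reproduce $\langle P_1f(b),f(b)\rangle-\langle P_1f(a),f(a)\rangle$, while the cross terms $\langle P_1f(b),f(a)\rangle-\langle P_1f(a),f(b)\rangle$ are of the form $z-\overline{z}$ by $P_1=P_1^*$ and therefore purely imaginary. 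Combined with the first step, this gives the crucial identity $2\re\langle Lx,x\rangle_X=\langle\Xi g,g\rangle_{\mathbb{C}^{2n}}$.

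Third, from $W_{B,1}=\tilde W_{B,1}R_0\inv$, $W_{B,2}=\tilde W_{B,2}R_0\inv$, $W_C=\tilde W_C R_0\inv$ and \eqref{eq:PHSBCS}, I would read off the pointwise identifications
\begin{equation*}
Gx=W_{B,1}g,\qquad Kx=W_Cg,\qquad x\in D(L)\ \Longleftrightarrow\ W_{B,2}g=0.
\end{equation*}
To turn the quantifier $\forall x\in D(L)$ in the passivity definitions into $\forall g\in\ker(W_{B,2})$, I would prove that the boundary-trace map $D(L)\ni x\mapsto g\in\ker(W_{B,2})$ is onto: given $g\in\ker(W_{B,2})$, decode $(f(b),f(a))^\top=R_0\inv g$, build any $f\in\H^1(a,b;\mathbb{C}^n)$ with these endpoint values (for instance by linear interpolation), and set $x=\mathcal{H}\inv f\in X$; the condition $W_{B,2}g=0$ precisely encodes the domain constraint in~\eqref{D(L)_PHS}.

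Finally, substituting these identifications into the boundary-node definitions of Section~\ref{sec:LQ_BCS} converts $\re\langle Lx,x\rangle_X\le\re\langle Gx,Kx\rangle_U$ into \eqref{ImpPass_PHS} and $2\re\langle Lx,x\rangle_X\le\|Gx\|_U^2-\|Kx\|_Y^2$ into \eqref{ScattPass_PHS}, with the equality statements translating verbatim to the energy-preserving cases; the requirement $k=p$ in the impedance case is forced by $Y=U$. I expect the main obstacle to be the surjectivity argument in the third step, since it is what upgrades the pointwise algebraic identities into true equivalences on $\ker(W_{B,2})$; the remainder is essentially bookkeeping once the identity $\langle\Xi g,g\rangle=2\re\langle Lx,x\rangle_X$ is in hand.
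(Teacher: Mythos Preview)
Your proposal is correct and follows essentially the same route as the paper. The paper's proof is very terse: it defines $g=R_0\pmatsmall{(\mathcal H x)(b)\\(\mathcal H x)(a)}$, invokes \cite[Lem.~7.2.1 \& (7.26)]{JacobZwart} for the identity $\re\langle Lx,x\rangle_X=\tfrac12\langle\Xi g,g\rangle_{\C^{2n}}$, and then reads off $Gx=W_{B,1}g$, $Kx=W_Cg$, $W_{B,2}g=0$ exactly as you do. Your version is in fact more complete in two respects: you derive the $\Xi$-identity by hand rather than citing it, and you supply the surjectivity argument for the trace map $D(L)\to\ker(W_{B,2})$, which the paper leaves implicit but which is needed for the ``if'' direction of the equivalence. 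Your caveat about the interior term $\int_a^b\langle(P_0+P_0^*)f,f\rangle\,\d\zeta$ is also well placed: the cited identity from \cite{JacobZwart} indeed uses the standing assumption $P_0^*=-P_0$, which the paper adopts implicitly through that reference.
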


\begin{proof}
Take $x\in \Dom(L)$. Then, $\mc H x\in \H^1(a,b;\Cc^n)$. Now we define $g=R_0 \pmatsmall{(\mc Hx)(b)\\ (\mc Hx)(a)}$. Consequently $W_{B,2}g=0$, $Gx = W_{B,1}g$, and $Kx=W_C g$. We have from~\citel{JacobZwart}{Lem.~7.2.1 \& (7.26)} that $\re \iprod{Lx}{x}_X=
\frac{1}{2} \iprod{\Xi g}{g}_{\C^{2n}}$. The proof concludes by noting that $\re \iprod{W_{B,1} g}{W_C g}_{\C^p}
=   \re \iprod{Gx}{Kx}_{\C^p}$ when $p=k$ and $\Vert Gx\Vert_{\C^p} = \Vert W_{B,1} g\Vert_{\C^p}$, $\Vert Kx\Vert_{\C^k} = \Vert W_C g\Vert_{\C^k}$.
\end{proof}

We end this section by presenting two main theorems for the LQ optimal control of port-Hamiltonian systems. Those theorems are adaptations of Theorems \ref{thm:Thm_LQ_ImpPass} and \ref{thm:Thm_LQ_ScattPass} recast for \eqref{eq:PHSmain}.

\begin{theorem}\label{thm:ImpPass_PHS}
Consider the boundary node $(G,L,K)$ as in \eqref{eq:PHSBCS} on the spaces $(\C^p,X,\C^p)$. Assume moreover that $2\re \iprod{  W_{B,1} g}{W_C g}_{\C^p}\ge \iprod{\Xi g}{g}_{\C^{2n}}$ holds for all $g\in\ker(W_{B,2})$. Then, the finite cost condition associated to the LQ optimal control problem for $(G,L,K)$ is satisfied and the optimal cost operator $\Pi$ satisfies $\Pi\leq I$. Moreover, if $2\re \iprod{  W_{B,1} g}{W_C g}_{\C^p}= \iprod{\Xi g}{g}_{\C^{2n}}$ for all $g\in\ker(W_{B,2})$ and if the partial differential equations \eqref{pH} together with the boundary conditions 
\eqn{
\label{BC_PHS_OutputFeedback}
\pmat{\tilde{W}_{B,1}+\tilde{W}_C\\\tilde{W}_{B,2}}\pmat{\mathcal{H}(b)x(b,t)\\\mathcal{H}(a)x(a,t)} = 0
}
is strongly stable, then $\Pi = I$. In addition, the unique optimal control is given by $u^{\opt}(t) = -\tilde{W}_C\pmatsmall{\mathcal{H}(b)x^{\opt}(b,t)\\\mathcal{H}(a)x^{\opt}(a,t)}, t\geq 0$, where $x^{\opt}(t)$ is subject to the PDEs \eqref{pH} with the boundary conditions \eqref{BC_PHS_OutputFeedback}. Furthermore, $\Pi = I$ satisfies the boundary node Riccati equation \eqref{eq:Riccati_BndNode} with $N:D(L)\to \C^p, Nx = \pmatsmall{\tilde{W}_{B,1} + \tilde{W}_C}\pmatsmall{\mathcal{H}(b)x(b)\\\mathcal{H}(a)x(a)}$.
\end{theorem}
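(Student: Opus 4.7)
The plan is to derive Theorem~\ref{thm:ImpPass_PHS} as a specialization of Theorem~\ref{thm:ImpPass_LQ_BND} to the boundary node $(G,L,K)$ defined in \eqref{eq:PHSBCS}, using Lemma~\ref{lemma_ImpPass_ScattPass_PHS} to translate the passivity hypotheses stated in terms of $W_{B,1}$, $W_{B,2}$, $W_C$ into the abstract impedance passivity/energy-preservation of the boundary node, and with an explicit identification of the closed-loop generator $L|_{\ker(G+K)}$ with the operator defined by \eqref{pH} under the boundary conditions \eqref{BC_PHS_OutputFeedback}.

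First, under Assumption~\ref{ass:PHSass}, the triple $(G,L,K)$ from \eqref{eq:PHSBCS} is a boundary node on $(\mathbb{C}^p, X, \mathbb{C}^p)$. By Lemma~\ref{lemma_ImpPass_ScattPass_PHS}, the hypothesis $2\re \langle W_{B,1} g, W_C g\rangle_{\mathbb{C}^p} \geq \langle \Xi g, g\rangle_{\mathbb{C}^{2n}}$ for all $g \in \ker(W_{B,2})$ is equivalent to impedance passivity of $(G,L,K)$. Applying the first half of Theorem~\ref{thm:ImpPass_LQ_BND} then immediately gives both the finite cost condition and the bound $\Pi \leq I$.

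Second, in the equality case the same lemma provides impedance energy preservation of $(G,L,K)$. The critical step is to identify the closed-loop generator: a function $x \in D(L)$ lies in $\ker(G+K)$ if and only if $(\tilde W_{B,1}+\tilde W_C)\pmatsmall{(\mc H x)(b)\\(\mc H x)(a)} = 0$, and combining this with the defining condition $\tilde W_{B,2}\pmatsmall{(\mc H x)(b)\\(\mc H x)(a)} = 0$ of $D(L)$ given in \eqref{D(L)_PHS} yields the joint boundary condition $\pmatsmall{\tilde W_{B,1}+\tilde W_C\\\tilde W_{B,2}}\pmatsmall{(\mc H x)(b)\\(\mc H x)(a)} = 0$, which is exactly \eqref{BC_PHS_OutputFeedback}. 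Hence the semigroup generated by $L|_{\ker(G+K)}$ coincides with the solution semigroup of \eqref{pH} together with \eqref{BC_PHS_OutputFeedback}. The strong stability hypothesis on that PDE system thus supplies the stability condition required in the second half of Theorem~\ref{thm:ImpPass_LQ_BND}, yielding $\Pi = I$.

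Third, the translation of the remaining conclusions of Theorem~\ref{thm:ImpPass_LQ_BND} is routine: substituting the definition of $K$ in \eqref{eq:PHSBCS} into the optimal feedback $u^{\opt}(t) = -Kx^{\opt}(t)$ gives $u^{\opt}(t) = -\tilde W_C \pmatsmall{\mc H(b)x^{\opt}(b,t)\\\mc H(a)x^{\opt}(a,t)}$, and substituting the definitions of $G$ and $K$ into the Riccati operator $Nx = (G+K)x$ of Theorem~\ref{thm:ImpPass_LQ_BND} gives $Nx = (\tilde W_{B,1} + \tilde W_C)\pmatsmall{\mc H(b)x(b)\\\mc H(a)x(a)}$. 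The only step with any genuine content is the identification of $L|_{\ker(G+K)}$ with the generator of \eqref{pH}--\eqref{BC_PHS_OutputFeedback}, and this boils down to a careful comparison of domains; the remaining arguments are substitutions into results already proved.
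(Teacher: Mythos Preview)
Your proof is correct and follows essentially the same approach as the paper: invoke Lemma~\ref{lemma_ImpPass_ScattPass_PHS} to translate the matrix conditions into impedance passivity (resp.\ energy preservation) of $(G,L,K)$, identify the semigroup generated by $L|_{\ker(G+K)}$ with the solution semigroup of \eqref{pH}--\eqref{BC_PHS_OutputFeedback}, and then apply Theorem~\ref{thm:ImpPass_LQ_BND}. Your write-up is in fact more explicit than the paper's about the domain identification and the translation of $u^{\opt}$ and $N$, but the logical structure is the same.
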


\begin{proof}
According to Lemma \ref{lemma_ImpPass_ScattPass_PHS}, \eqref{ImpPass_PHS} is equivalent to $(G,L,K)$ being impedance passive. Moreover, the classical solutions of \eqref{pH} with \eqref{BC_PHS_OutputFeedback} are $\T^{\ker(G+K)}_tx_0, t\geq 0$, where $\T^{\ker(G+K)}$ is the semigroup generated by $L_{\vert\ker(G+K)}$ and $x_0$ is the initial condition. The proof then follows by Theorem \ref{thm:ImpPass_LQ_BND}.
\end{proof}

\begin{remark}
In the case where $p = n$, it has been shown in \cite[Lem.~7]{PauLeG21} that any negative output feedback $u(t) = -\kappa y(t), \kappa > 0$, is exponentially stabilizing for impedance passive port-Hamiltonian systems described by \eqref{eq:PHSmain}. Under the additional assumption that such systems are impedance energy preserving, this in particular implies that the unique optimal control is given by $u(t) = -y(t)$ according to Theorem \ref{thm:ImpPass_PHS}. As shown in \cite{PauLeG21}, this result also applies to impedance passive port-Hamiltonian systems of arbitrary order. Contrary to \cite{Pau19}, Theorem \ref{thm:ImpPass_PHS} has not the assumption that $p=n$. However, strong stability is not for free and has to be assumed.
\end{remark}

\begin{theorem}
Consider the boundary node $(G,L,K)$ as in \eqref{eq:PHSBCS} on the spaces $(\C^p,X,\C^k)$. Assume moreover that $\Vert W_{B,1} g\Vert_{\C^p}^2 - \Vert W_C g\Vert^2_{\C^k}\ge \iprod{\Xi g}{g}_{\C^{2n}}$ holds for all $g\in\ker(W_{B,2})$. Then, the finite cost condition associated to the LQ optimal control problem for $(G,L,K)$ is satisfied and the optimal cost operator $\Pi$ satisfies $\Pi\leq I$. Moreover, if $\Vert W_{B,1} g\Vert_{\C^p}^2 - \Vert W_C g\Vert^2_{\C^k} = \iprod{\Xi g}{g}_{\C^{2n}}$ holds for all $g\in\ker(W_{B,2})$ and if the partial differential equations \eqref{pH} together with the boundary conditions 
\eqn{
\label{BC_PHS}
\pmat{\tilde{W}_{B,1}\\\tilde{W}_{B,2}}\pmat{\mathcal{H}(b)x(b,t)\\\mathcal{H}(a)x(a,t)} = 0
}
is strongly stable, then $\Pi = I$. In addition, the unique optimal control is given by $u^{\opt}(t) = 0, t\geq 0$. Furthermore, $\Pi = I$ satisfies the boundary node Riccati equation \eqref{eq:Riccati_BndNode} with $N:D(L)\to \C^p, Nx = \sqrt{2}\tilde{W}_{B,1}\pmatsmall{\mathcal{H}(b)x(b)\\\mathcal{H}(a)x(a)}$.
\end{theorem}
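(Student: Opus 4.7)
The plan is to mirror, in the scattering-passive regime, the argument used for Theorem \ref{thm:ImpPass_PHS}: combine the equivalence provided by Lemma \ref{lemma_ImpPass_ScattPass_PHS} with the abstract boundary-node result Theorem \ref{thm:ScattPass_LQ_BND}. Because the preceding machinery has been developed with exactly this deduction in mind, the argument should be short and consist of essentially two translation steps and a small verification.

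First I would apply Lemma \ref{lemma_ImpPass_ScattPass_PHS} to the boundary node $(G,L,K)$ defined through \eqref{eq:PHSBCS}. By that lemma, the assumption $\|W_{B,1}g\|_{\C^p}^2 - \|W_C g\|_{\C^k}^2 \ge \iprod{\Xi g}{g}_{\C^{2n}}$ for all $g \in \ker(W_{B,2})$ is equivalent to $(G,L,K)$ being scattering passive, and the corresponding equality is equivalent to $(G,L,K)$ being scattering energy preserving. Thus the hypotheses of the present theorem translate directly into the hypotheses of Theorem \ref{thm:ScattPass_LQ_BND}.

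Second, invoking Theorem \ref{thm:ScattPass_LQ_BND} for the scattering passive node $(G,L,K)$ yields immediately the finite cost condition and the bound $\Pi \le I$. In the energy preserving case, and under strong stability of the semigroup generated by $L|_{\ker(G)}$, that theorem also gives $\Pi = I$, the unique stabilising optimal control $u^{\opt}(t) = 0$, and the boundary-node Riccati equation \eqref{eq:Riccati_BndNode} with $N \colon \Dom(L) \to \C^p$, $Nx = \sqrt{2}\, Gx$. Substituting the explicit formula for $G$ from \eqref{eq:PHSBCS} yields the announced expression $Nx = \sqrt{2}\, \tilde W_{B,1} \pmatsmall{\mathcal{H}(b)x(b)\\ \mathcal{H}(a)x(a)}$.

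The only point that needs a brief verification, and the mildest potential source of difficulty, is that the abstract strong stability assumption matches the PDE strong stability formulated via \eqref{pH}--\eqref{BC_PHS}. Since $Gx = \tilde W_{B,1} \pmatsmall{(\mathcal{H} x)(b)\\ (\mathcal{H} x)(a)}$ and $\Dom(L)$ already enforces $\tilde W_{B,2} \pmatsmall{(\mathcal{H} x)(b)\\ (\mathcal{H} x)(a)} = 0$ by \eqref{D(L)_PHS}, the restriction $L|_{\ker(G)}$ has boundary conditions given by the full kernel of $\pmatsmall{\tilde W_{B,1}\\ \tilde W_{B,2}}$ acting on the boundary trace, which are precisely \eqref{BC_PHS}. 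Hence its semigroup governs the classical solutions of \eqref{pH}--\eqref{BC_PHS}, and no further obstacle arises.
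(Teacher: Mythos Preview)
Your proposal is correct and follows essentially the same approach as the paper: apply Lemma \ref{lemma_ImpPass_ScattPass_PHS} to translate the matrix inequalities into scattering passivity (resp.\ energy preservation), identify the PDE \eqref{pH}--\eqref{BC_PHS} with the semigroup generated by $L\vert_{\ker(G)}$, and then invoke Theorem \ref{thm:ScattPass_LQ_BND}. The paper's proof is just a terse three-sentence version of exactly this argument.
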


\begin{proof}
According to Lemma \ref{lemma_ImpPass_ScattPass_PHS}, \eqref{ScattPass_PHS} is equivalent to $(G,L,K)$ being scattering passive. Moreover, the classical solutions of \eqref{pH} with \eqref{BC_PHS} coincides with $\T^{\ker(G)}_tx_0$, where $\T^{\ker(G)}$ is the semigroup generated by $L_{\vert\ker(G)}$ and $x_0$ is the initial condition. The proof then follows by Theorem \ref{thm:ScattPass_LQ_BND}.
\end{proof}

\section{An Euler-Bernoulli beam with shear force control}\label{sec:EB}

We consider the LQ optimal control problem for an Euler-Bernoulli beam
\begin{subequations}
\label{eq:Euler-Bernoulli}
\begin{align}
&\frac{\partial^2 w}{\partial t^2}(\zeta,t) = -\frac{\partial^4 w}{\partial\zeta^4}(\zeta,t),\\
&w(0,t) = \frac{\partial w}{\partial\zeta}(0,t) = \frac{\partial^2w}{\partial\zeta^2}(1,t) = 0,\label{eq:BC_EB}\\
&u(t) = \frac{\partial^3 w}{\partial\zeta^3}(1,t),\label{eq:Input_EB}\\
&y(t) = -\frac{\partial w}{\partial t}(1,t) + \varepsilon\frac{\partial^3 w}{\partial\zeta^3}(1,t),\label{eq:Output_EB}
\end{align}
\end{subequations}
where $w(\zeta,t)$ is the vertical displacement of the beam at position $\zeta\in(0,1)$ and time $t\geq 0$. The boundary conditions \eqref{eq:BC_EB} mean that the beam is clamped at $\zeta = 0$ and that there is no bending moment at $\zeta=1$. The shear force at $\zeta = 1$ is controlled by $u(t)$ and the measurement $y(t)$ is the velocity at $\zeta = b$ perturbed by the term $\varepsilon\frac{\partial^3w}{\partial\zeta^3}(1,t)$, $\varepsilon>0$, which may be viewed as a perturbation induced by the input at the same boundary point.
Such a model has been considered a lot in the literature. For instance, it has been used as a simplified model for the \emph{mast control system} in NASA's COFS (Control of flexible structures), see e.g. \cite{CheKra87}. Moreover, systems like \eqref{eq:Euler-Bernoulli} have attracted attention concerning the Riesz-basis property, see e.g. \cite{XuWei11} and \cite{GuoWan05}. The control objective in this section is to minimize the cost \eqref{eq:Cost}
subject to the dynamics \eqref{eq:Euler-Bernoulli}. We start by defining the auxiliary variables $x_1(\zeta,t) := w(\zeta,t)$ and $x_2(\zeta,t) := \frac{\partial w}{\partial t}(\zeta,t)$. With these variables, \eqref{eq:Euler-Bernoulli} may be written in an abstract form as $\pmatsmall{\dot{x}(t)\\y(t)} = \pmatsmall{A\& B\\ C\&D}\pmatsmall{x(t)\\u(t)}$, where the system node $S := \pmatsmall{A\& B\\ C\&D}$ is given by
\begin{subequations}
\label{eq:EB_SN}
\begin{align}
&A\&B\pmat{x\\u} := \pmat{x_2\\-\frac{\d^4x_1}{\d\zeta^4}}\\
&C\&D\pmat{x\\ u} := -x_2(1) + \varepsilon \frac{\d^3 x_1}{\d\zeta^3}(1)\\
&D(S) = \left\{\pmat{x\\u}\in \left( X\cap\left(\H^4(0,1;\Cc)\times\H^2(0,1;\Cc)\right)\right)\times\Cc, \right.\\
&\hspace{1cm}x_2(0) = \frac{\d x_2}{\d\zeta}(0) = 0,\\
&\hspace{1cm}\left.\frac{\d^2 x_1}{\d\zeta^2}(1) = 0, \frac{\d^3 x_1}{\d\zeta^3}(1) = u\right\}
\end{align}
\end{subequations}
with $x := (x_1,x_2)^\top$ and 
where the Hilbert state space $X$ is defined by 
\begin{align*}
    X := \left\{x\in\H^2(0,1;\Cc)\times\L^2(0,1;\Cc), x_1(0) = \frac{\d x_1}{\d\zeta}(0) = 0\right\},
\end{align*}
equipped with the inner product
\begin{align*}
    \langle f,g\rangle_X := \int_0^1 \frac{\d^2 f_1}{\d\zeta^2}\overline{\frac{\d^2g_1}{\d\zeta^2}} + f_2\overline{g_2}\d\zeta, 
\end{align*}
$f := (f_1,f_2)^\top, g := (g_1,g_2)^\top$.
A simple computation shows that 
\eq{
\label{eq:ImpPass_EB_SN}
\re\iprod{A\&B\pmatsmall{x\\u}}{x}_X = \re\iprod{C\&D\pmatsmall{x\\u}}{u}_\Cc - \varepsilon\Vert u\Vert^2_\Cc,
}
for all $(x,u)^\top\in D(S)$, which implies that $S$ is impedance passive. Along the generalized solutions $(x,u,y)$ of $S$, the equality above is equivalent to
\begin{equation}
    \Vert x(t)\Vert^2_X - \Vert x_0\Vert_X^2 = 2\re \int_0^t\langle u(\tau),y(\tau)\rangle_\Cc - \varepsilon\Vert u(\tau)\Vert^2_\Cc\d\tau,
\label{eq:ImpPass_EB_Int}
\end{equation}
$t\geq 0$. The estimate \eqref{eq:ImpPass_EB_Int} actually reveals that $S$ is \emph{strictly input passive}, see e.g. \citel{Augner_PhD}{Chap.~5}. The notion of \emph{strictly input or output passive} systems has already been defined in \cite{vdS99_Book} for nonlinear systems. Few years later, this property has been shown useful for many reasons. In particular, it is shown in \cite{RamZwa13} and \cite{AugJac14} that finite-dimensional strictly input passive controllers are able to stabilize a significantly large class of boundary control systems. A strictly output passive system has also interesting properties. For instance, for strictly output passive infinite-dimensional first-order port-Hamiltonian systems, exponential stability of the system's semigroup is equivalent to the system being exactly controllable or exactly observable in finite time, see e.g.~\citel{Villegas}{Thm.~3.22}. Many other interesting properties of such systems may be found in \citel{Villegas}{Chap.~3}. The following proposition gives the expression of the optimal input that minimizes the cost \eqref{eq:Cost} under the dynamics \eqref{eq:Euler-Bernoulli}.
\begin{proposition}
The unique optimal input $u^{\opt}$ that minimizes the cost \eqref{eq:Cost} under the dynamics of the Euler-Bernoulli beam \eqref{eq:Euler-Bernoulli} is given by 
\eqn{
\label{eq:OptimalInput_EB}
u^{\opt}(t) &= -\mu\left[ \varepsilon\frac{\partial^3 w^{\opt}}{\partial\zeta^3}(1,t)-\frac{\partial w^{\opt}}{\partial t}(1,t)\right],
}
$\mu := \left(\sqrt{1+\varepsilon^2}-\varepsilon\right)^{-1}$, where $w^{\opt}$ satisfies the following partial differential equations
\begin{subequations}
\label{eq:Euler-Bernoulli_opt}
\begin{align}
&\frac{\partial^2 w^{\opt}}{\partial t^2}(\zeta,t) = -\frac{\partial^4 w^{\opt}}{\partial\zeta^4}(\zeta,t),\\
&w^{\opt}(0,t) = \frac{\partial w^{\opt}}{\partial\zeta}(0,t) = \frac{\partial^2w^{\opt}}{\partial\zeta^2}(1,t) = 0,\label{eq:BC_EB_opt}\\
&\frac{\partial w^{\opt}}{\partial t}(1,t) = \sqrt{1+\varepsilon^2}\frac{\partial^3w^{\opt}}{\partial\zeta^3}(1,t).\label{eq:New_BC_EB_opt}
\end{align}
\end{subequations}
Moreover, the optimal cost $J(w_0,u^{\opt})$ is given by
\eq{
J(w_0,u^{\opt}) = \mu^{-1}\Vert x_0\Vert_X,
}
with $x_0(\cdot) = (w^{\opt}(\zeta,0), \frac{\partial w^{\opt}}{\partial t}(\zeta,0))^\top, \zeta\in(0,1)$.
\end{proposition}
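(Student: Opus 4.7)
The key structural observation is that \eqref{eq:ImpPass_EB_SN} holds with \emph{equality}, so the system is in fact strictly input passive, and the LQ integrand $|u|^2+|y|^2$ can be rewritten as a perfect square in $(u,y)$ plus a total time derivative of $\|x\|_X^2$. Accordingly, the plan is: (i) complete the square to identify the feedback gain $\mu$; (ii) derive the closed-loop boundary condition \eqref{eq:New_BC_EB_opt}; (iii) verify strong stability of the closed-loop semigroup (the main obstacle); and (iv) read off the optimal cost.

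For step (i), I look for scalars $\mu>0$ and $\alpha,\beta\in\R$ such that $|u|^2+|y|^2 = \alpha|u+\mu y|^2 + \beta\frac{\d}{\d t}\|x\|_X^2$. Substituting the infinitesimal form of \eqref{eq:ImpPass_EB_Int}, namely $\frac{\d}{\d t}\|x\|_X^2 = 2\re(\bar u y) - 2\varepsilon|u|^2$, and matching the coefficients of $|u|^2$, $|y|^2$, and $\re(\bar uy)$ yields $\alpha\mu^2=1$, $\alpha\mu+\beta=0$, $\alpha-2\varepsilon\beta=1$, which collapse to $\mu^2-2\varepsilon\mu-1=0$. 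The positive root is $\mu=\varepsilon+\sqrt{1+\varepsilon^2}=(\sqrt{1+\varepsilon^2}-\varepsilon)^{-1}$, and then $\alpha=\mu^{-2}$, $\beta=-\mu^{-1}$. Integrating on $[0,t]$ along any generalized solution of \eqref{eq:EB_SN} gives
\begin{equation*}
\int_0^t(|u|^2+|y|^2)\,\d\tau = \tfrac{1}{\mu^2}\int_0^t|u+\mu y|^2\,\d\tau + \tfrac{1}{\mu}(\|x_0\|_X^2-\|x(t)\|_X^2).
\end{equation*}
The candidate optimal input is $u^{\opt}=-\mu y^{\opt}$, matching \eqref{eq:OptimalInput_EB}. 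For step (ii) I combine $u=w'''(1,t)$ from \eqref{eq:Input_EB} with $u=-\mu y=\mu\dot w(1,t)-\mu\varepsilon w'''(1,t)$ and solve for $\dot w(1,t)$: $(1+\mu\varepsilon)w'''(1,t)=\mu\dot w(1,t)$, hence $\dot w(1,t)=(\mu^{-1}+\varepsilon)w'''(1,t)=\sqrt{1+\varepsilon^2}\,w'''(1,t)$, which is precisely \eqref{eq:New_BC_EB_opt}. As an independent consistency check one may verify that $\Pi=\mu^{-1}I$ together with $E\&F\pmatsmall{x\\u}=\mu^{-1}(u+\mu y)$ solves the operator-node Riccati equation \eqref{ON_Riccati}, the verification reducing to the same identity $\mu^2-2\varepsilon\mu-1=0$ in combination with \eqref{eq:ImpPass_EB_SN}.

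The hard part will be step (iii), strong stability of the closed-loop semigroup on $X$ associated with \eqref{eq:BC_EB_opt}--\eqref{eq:New_BC_EB_opt}. Along closed-loop solutions one has $\frac{\d}{\d t}\|x\|_X^2=-2\mu(1+\mu\varepsilon)|y|^2\leq 0$, so the energy is non-increasing. I would couple this with compactness of the closed-loop resolvent, inherited from the embedding $\H^4(0,1;\C)\times\H^2(0,1;\C)\hookrightarrow X$, to obtain precompactness of trajectories, and then apply LaSalle's invariance principle: $\omega$-limit points of any trajectory satisfy $y\equiv 0$ along their future evolution, hence $w'''(1,\cdot)=\dot w(1,\cdot)=0$ thanks to \eqref{eq:New_BC_EB_opt}, and it remains to check that the resulting over-determined biharmonic eigenvalue problem $\phi''''=\lambda\phi$ with $\phi(0)=\phi'(0)=\phi''(1)=\phi'''(1)=0$ admits only the zero solution, which is a straightforward ODE computation. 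Alternatively, existing results on (exponentially) stabilizing tip damping for Euler-Bernoulli beams such as \cite{CheKra87,GuoWan05} could serve as a back-up. Once strong stability is secured, $\lim_{t\to\infty}\|x^{\opt}(t)\|_X=0$ and the decomposition of step (i) gives $J(w_0,u^{\opt})=\mu^{-1}\|x_0\|_X^2$; the unique minimality of $u^{\opt}$ among finite-cost inputs follows from strict convexity of the cost together with the characterization of $\Pi=\mu^{-1}I$ as the stabilizing solution to the operator-node Riccati equation in the sense of \cite{Opmeer_Staffans}.
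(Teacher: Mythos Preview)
Your argument is essentially the paper's: complete the square in the cost using the strict-input-passivity identity \eqref{eq:ImpPass_EB_Int}, read off the feedback gain $\mu$ and the closed-loop boundary condition, and verify the operator-node Riccati equation with $\Pi=\mu^{-1}I$ and $E\&F\pmatsmall{x\\u}=y+\mu^{-1}u$. The only substantive difference is in step~(iii): the paper simply cites \citel{ConMor98}{Lem.~3.1}, which gives exponential stability of the closed-loop operator for any positive damping coefficient $\alpha=(1+\varepsilon^2)^{-1/2}$, whereas you sketch a LaSalle argument. That sketch is sound in outline, but note one slip: the eigenvalue problem you write, $\phi''''=\lambda\phi$ with $\phi(0)=\phi'(0)=\phi''(1)=\phi'''(1)=0$, is \emph{not} over-determined --- it is the standard clamped--free cantilever problem and has a full sequence of nontrivial eigenfunctions. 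The ``over-determination'' must come from the additional constraint $\phi(1)=0$ that you correctly derived from $\dot w(1,\cdot)=0$ but then omitted from the list; with that fifth condition the conclusion $\phi=0$ does follow. Your backup references suffice in any case.
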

\begin{proof}
Let $\mu := \left(\sqrt{1+\varepsilon^2}-\varepsilon\right)^{-1}$. First observe that \eqref{eq:Euler-Bernoulli} may be written in an abstract form as $\pmatsmall{\dot{x}(t)\\y(t)} = \pmatsmall{A\& B\\ C\&D}\pmatsmall{x(t)\\u(t)}$, where the system node $S := \pmatsmall{A\&B\\ C\&D}$ is given in  \eqref{eq:EB_SN}. Moreover, $S$ is impedance passive according to \eqref{eq:ImpPass_EB_Int}. Hence, thanks to Theorem \ref{thm:Thm_LQ_ImpPass}, the finite cost condition is satisfied and the optimal cost operator, denoted by $\Pi$, satisfies $\Pi\leq I$. Now observe that, thanks to the estimate \eqref{eq:ImpPass_EB_Int}, the cost \eqref{eq:Cost} may be written as
\begin{align*}
J(x_0,u) &= \int_0^\infty \Vert u(t)\Vert^2_\Cc+\Vert y(t)\Vert^2_\Cc\d t\\
&= \int_0^\infty\Vert \mu^{-1}u(t) + y(t)\Vert^2_\Cc\d t + \mu^{-1}\left[\Vert x_0\Vert^2_X - \lim_{t\to\infty}\Vert x(t)\Vert^2_X\right].
\end{align*}
Because we look for a stabilizing optimal control, it is clear from the previous expression that $u(t) = -\mu y(t)$ is the unique optimal control provided that it is strongly stabilizing. We denote it by $u^{\opt}$. In this case, the optimal cost is given by $J(x_0,u^{\opt}) = \mu^{-1}\Vert x_0\Vert^2_X$, with $\Pi = \mu^{-1}I$ the optimal cost operator. In order to analyze the stability of the closed-loop system, observe that injecting $u(t) = -\mu y(t)$ into \eqref{eq:Euler-Bernoulli} yields a system that is described by \eqref{eq:Euler-Bernoulli_opt}. The latter may be written in state-space form as $\dot{x}^{\opt}(t) = A^{\opt}x^{\opt}(t)$, where 
\begin{subequations}
\label{eq:OpA_opt}
\begin{align*}
A^{\opt}x &= \pmatsmall{x_2\\-\frac{\d^4x_1}{\d\zeta^4}}\\
D(A^{\opt}) &= \left\{x\in X\cap\left(\H^4(0,1)\times\H^2(0,1)\right),\right.\\
&\hspace{1cm}x_2(0) = \frac{\d x_2}{\d\zeta}(0) = 0,\\
&\hspace{1cm}\left.\frac{\d^2 x_1}{\d\zeta^2}(1) = 0, \frac{\d^3 x_1}{\d\zeta^3}(1) = \alpha x_2(1)\right\},
\end{align*}
\end{subequations}
where $\alpha = \left(\sqrt{1+\varepsilon^2}\right)^{-1}$. According to \citel{ConMor98}{Lem.~3.1}, the operator $A^{\opt}$ is the infinitesimal generator of an exponentially stable strongly continuous semigroup for any $\alpha > 0$, which means that $u^{\opt}(t)$ is stabilizing for \eqref{eq:Euler-Bernoulli} and hence it is the unique stabilizing optimal control. Now, we concentrate on the operator node Riccati equation \eqref{ON_Riccati}. We shall check that \eqref{ON_Riccati} is satisfied with $\Pi = \mu^{-1} I$ and $E\&F\pmatsmall{x\\u} = C\&D\pmatsmall{x\\u} + \mu^{-1} u, (x,u)^\top\in D(S)$. Take $(x,u)^\top\in D(S)$. It holds that
\begin{align*}
&\iprod{A\&B\pmatsmall{x\\u}}{\Pi x}_X + \iprod{\Pi x}{A\&B\pmatsmall{x\\u}}_X + \Vert u\Vert^2_\C + \Vert C\&D\pmatsmall{x\\u}\Vert^2_\C\\
&= 2\mu^{-1}\re\iprod{A\&B\pmatsmall{x\\u}}{x}_X + \Vert u\Vert^2_\C + \Vert C\&D\pmatsmall{x\\u}\Vert^2_\C\\
&= 2\mu^{-1}\re\iprod{C\&D\pmatsmall{x\\u}}{u}_\C - 2\varepsilon\mu^{-1}\Vert u\Vert^2_\C + \Vert u\Vert^2_\C + \Vert C\&D\pmatsmall{x\\u}\Vert^2_\C\\
&= 2\mu^{-1}\re\iprod{C\&D\pmatsmall{x\\u}}{u}_\C + \mu^{-2}\Vert u\Vert^2_\C + \Vert C\&D\pmatsmall{x\\u}\Vert^2_\C\\
&= \Vert C\&D\pmatsmall{x\\u} + \mu^{-1} u\Vert^2_\C\\
&:= \Vert E\&F\pmatsmall{x\\u}\Vert_\C^2.
\end{align*}
According to \cite{Opmeer_MTNS}, $u^{\opt}$ is a solution to $E\&F\pmatsmall{x^{\opt}\\u^{\opt}} = 0$, which entails that 
$u^{\opt}(t) = -\mu C\&D\pmatsmall{x^{opt}\\u^{\opt}}$. This confirms what has been found earlier.
\end{proof}

\begin{remark}
It is easy to see that $0<\mu^{-1} = \sqrt{1+\varepsilon^2}-\varepsilon<1$, which implies that $\Pi\leq I$. This corroborates what has been shown in Theorems \ref{thm:Thm_LQ_ImpPass} and \ref{thm:ImpPass_LQ_BND}.
\end{remark}

\section*{Acknowledgments}
This work was supported by the German Research Foundation (DFG). A. H. is supported by the DFG under the Grant HA 10262/2-1. The authors thank Hans Zwart for helpful discussions on the paper.

\bibliographystyle{elsarticle-num} 
\bibliography{biblio}

\end{document}